\documentclass[a4paper,12pt]{amsart}
\usepackage{amsthm, amsmath,amssymb,amsxtra}

\setlength{\textwidth}{13cm}

\newtheorem{theorem}{Theorem}[section]

\newtheorem{lemma}[theorem]{Lemma}

\theoremstyle{definition}

\newtheorem{remark}[theorem]{Remark}

\newtheorem*{theoremA}{Theorem A}
\newtheorem*{theoremB}{Theorem B}
\newtheorem*{theoremC}{Theorem C}

\newtheorem*{lemmaA1}{Lemma A.1}
\newtheorem*{lemmaA2}{Lemma A.2}
\newtheorem*{lemmaA3}{Lemma A.3}


\newcommand{\na}{\mathbb{N}}
\newcommand{\re}{\mathbb{R}}
\def\bra#1{\langle {#1} \rangle }
\def\r2n{\re^n\times (\re^n\!\setminus\!\{ 0\})}
\def\dbar{\mbox{\setbox0=\hbox{$d$}$d$\kern-.55\wd0\vbox{%
\hrule height.1ex width.75\wd0\kern1.3ex}}}


\begin{document}
\subjclass[2010]{ 
Primary 35Q41; Secondary 42B35.
}

\keywords{ 
Schr\"odinger operator, Modulation space, Wave packet transform.
}

\date{December 23, 2012}
\title[]{Estimates on Modulation Spaces for Schr\"odinger Evolution
Operators with Quadratic and  Sub-quadratic Potentials}

\author{Keiichi Kato, Masaharu Kobayashi and Shingo Ito}

\address{ 
Department of Mathematics \endgraf
Tokyo University of Science \endgraf
Kagurazaka 1-3, Shinjuku-ku, Tokyo 162-8601,
Japan
}
\email{kato@ma.kagu.tus.ac.jp}

\address{ 
Department of Mathematical Sciences, Faculty of Science \endgraf
Yamagata University \endgraf
Kojirakawa 1-4-12, Yamagata-City,
Yamagata 990-8560,
Japan\endgraf
}
\email{kobayashi@sci.kj.yamagata-u.ac.jp }

\address{ 
Department of Mathematics \endgraf
Tokyo University of Science \endgraf
Kagurazaka 1-3, Shinjuku-ku, Tokyo 162-8601,
Japan
}
\email{ito@ma.kagu.tus.ac.jp}

\maketitle

\begin{abstract}
In this paper we give new estimates for the solution to the
 Schr\"odinger equation with quadratic and sub-quadratic potentials
 in the framework of modulation spaces.
\end{abstract}


\maketitle

\section{Introduction}
In this paper, we shall give estimates for the solution to the time
 dependent Schr\"odinger equation
    \begin{align}
    \label{SE}
    \begin{cases}
    i\partial_t u(t,x)=-\frac{1}{2}\Delta u(t,x)+V(t,x)u(t,x),  &(t,x)\in
    \re\times\re^n,\\
    u(0,x)=u_0(x), &x\in \re^n
    \end{cases}
    \end{align}
 in the framework of modulation spaces. Here $i= \sqrt{-1}$,  $u(t,x)$
 is a complex valued function of $(t,x) \in {\mathbb R} \times {\mathbb
 R}^n$, $V(t,x)$ is a real valued function of $(t,x) \in {\mathbb R}
 \times {\mathbb R}^n$, $u_0(x)$ is a complex valued function of $x \in
 {\mathbb R}^n$, $\partial_t u= {\partial u}/{\partial t}$ and 
 $\Delta u= \sum^{n}_{i=1} {\partial^2 u }/{ \partial x_i^2}$.

We shall highlight the case $V \in C^{\infty}({\mathbb R} \times
 {\mathbb R}^n)$ and for all multi-indices $\alpha$ with $|\alpha|\ge 2$
 or $|\alpha|\ge 1$ there exists $C_\alpha >0$ such that 
    \begin{align}
    \label{assumption_V}
    |\partial^\alpha_xV(t,x)|\le C_{\alpha},
    \quad (t,x)\in \re\times\re^n .
    \end{align}

There are a large number of works devoted to study the equation \eqref{SE}.
Particularly, in the context of modulation spaces $M^{p,q}$,
 these types of issues were initiated in the works of 
 B\'{e}nyi-Gr\"{o}chenig-Okoudjou-Rogers \cite{Benyi et all},
 Wang-Hudzik \cite{Wang-Hudzik} and Wang-Zhao-Guo \cite{WZG}. 

\begin{theoremA}
(B\'{e}nyi-Gr\"{o}chenig-Okoudjou-Rogers \cite{Benyi et all}) 
Let $1 \leq p,q \leq \infty$ and 
 $\varphi_0 \in {\mathcal S}({\mathbf R}^n) \setminus \{ 0 \}$.
Suppose $V(t,x)=0$. 
Then there exists a positive constant $C$ such that 
    \begin{align*}
    \| u(t,\cdot)\|_{M^{p,q}_{\varphi_0}}\le C (1+|t|)^{n/2}
    \| u_0\|_{M^{p,q}_{\varphi_0}}, 
    \quad u_0\in\mathcal{S}(\re^n)
    \end{align*}
 for all $t\in\re$, where $u(t,x)$ is the solution of \eqref{SE} with 
 $u(0,x)=u_0(x)$. 
\end{theoremA}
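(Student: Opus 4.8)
The plan is to reduce the assertion to a uniform frequency--decomposition characterization of the modulation norm and then to analyze the free propagator block by block. Since any two nonzero windows in $\mathcal{S}(\re^n)$ induce equivalent norms, I would first replace $\|\cdot\|_{M^{p,q}_{\varphi_0}}$ by the equivalent norm built from a smooth partition of unity $\{\sigma_k\}_{k\in\mathbb{Z}^n}$ subordinate to the unit cubes $Q_k=k+[-\tfrac12,\tfrac12]^n$, namely $\|f\|_{M^{p,q}}\approx\big(\sum_{k}\|\Box_k f\|_{L^p}^q\big)^{1/q}$ with $\Box_k=\sigma_k(D)$ (and the obvious $\sup_k$ replacement when $q=\infty$). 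For $V\equiv0$ the solution operator is the Fourier multiplier with the unimodular symbol $e^{-i c t|\xi|^2}$, where $c\neq0$ is a fixed constant fixed by the normalizations of $\Delta$ and of the Fourier transform, so the whole problem becomes one of controlling a single unimodular multiplier on each frequency block.

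The key algebraic step is to strip off the non-oscillatory part of the symbol on each cube. On $Q_k$ I would expand $|\xi|^2=|k|^2+2k\cdot(\xi-k)+|\xi-k|^2$, so that the symbol factors as a unimodular constant, a linear phase $e^{-2ictk\cdot(\xi-k)}$, and a genuine chirp $m_k(\xi)=e^{-ict|\xi-k|^2}$ centered at $k$. The constant has modulus one and the linear phase is the symbol of a translation in $x$, so both act isometrically on $L^p$; hence $\|\Box_k u(t)\|_{L^p}=\|(\tilde\sigma_k\,m_k)^{\vee}\ast \Box_k f\|_{L^p}$, where $\tilde\sigma_k$ is a fattened bump equal to $1$ on $\mathrm{supp}\,\sigma_k$. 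By Young's inequality this is at most $\|(\tilde\sigma_k\,m_k)^{\vee}\|_{L^1}\,\|\Box_k f\|_{L^p}$, and after the substitution $\eta=\xi-k$ one sees that this $L^1$ norm is independent of $k$ and equals $\|g_t\|_{L^1}$ with $g_t=\big(\psi\,e^{-ict|\cdot|^2}\big)^{\vee}$ for a fixed bump $\psi$.

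The heart of the matter --- and the step I expect to be the main obstacle --- is therefore the single oscillatory-integral estimate $\|g_t\|_{L^1}\le C(1+|t|)^{n/2}$. For $|t|\le1$ this is immediate from the rapid decay of $g_t$. For $|t|\ge1$ I would apply stationary phase to $g_t(x)=\int \psi(\eta)\,e^{i(-ct|\eta|^2+x\cdot\eta)}\,d\eta$: the phase has a single nondegenerate critical point $\eta=x/(2ct)$ with Hessian of size $\sim t$, so where this point lies in $\mathrm{supp}\,\psi$, i.e. $|x|\lesssim|t|$, one gets $|g_t(x)|\sim|t|^{-n/2}$, while non-stationary phase (repeated integration by parts) yields rapid decay for $|x|\gtrsim|t|$. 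Integrating the bound $|t|^{-n/2}$ over the region $\{|x|\lesssim|t|\}$, which has volume $\sim|t|^{n}$, produces exactly the factor $|t|^{n/2}$; this is the precise origin of the exponent $n/2$ in the statement.

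Finally I would assemble the pieces. Combining the per-block bound $\|\Box_k u(t)\|_{L^p}\le \|g_t\|_{L^1}\,\|\Box_k f\|_{L^p}\le C(1+|t|)^{n/2}\|\Box_k f\|_{L^p}$ with the $\ell^q$ summation is immediate, because the right-hand factor is exactly $\Box_k f$ (the fattening $\tilde\sigma_k$ lives only inside the convolution kernel and costs nothing in the sum). This gives $\|u(t)\|_{M^{p,q}}\le C(1+|t|)^{n/2}\|f\|_{M^{p,q}}$ for all $1\le p,q\le\infty$, and translating back through the norm equivalence yields the claimed estimate for $\|\cdot\|_{M^{p,q}_{\varphi_0}}$; for $u_0\in\mathcal{S}(\re^n)$ both sides are manifestly finite, so no limiting argument is needed.
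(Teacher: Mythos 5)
Your argument is correct, and it reaches the sharp exponent $n/2$; but it follows a genuinely different route from the one this paper has in mind. The paper does not reprove Theorem A directly: it observes that Theorem C$(i)$ --- the exact identity $\|u(t,\cdot)\|_{M^{p,q}_{\varphi(t,\cdot)}}=\|u_0\|_{M^{p,q}_{\varphi_0}}$ with the \emph{time-evolved} window $\varphi(t,\cdot)=e^{\frac{1}{2}it\Delta}\varphi_0$ --- covers Theorem A, the point being that changing the window back from $\varphi(t,\cdot)$ to $\varphi_0$ costs a factor of the order $\|W_{\varphi_0}\varphi(t,\cdot)\|_{L^1}/\|\varphi_0\|_{L^2}^2$, which for a Gaussian $\varphi_0$ is computed explicitly and grows like $(1+|t|)^{n/2}$ because the evolved packet spreads to width $(1+t^2)^{1/2}$. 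You instead pass to the equivalent frequency-uniform-decomposition norm $\bigl(\sum_k\|\Box_k f\|_{L^p}^q\bigr)^{1/q}$, strip the affine part of the phase on each unit cube (a unimodular constant and a translation, hence $L^p$-isometries), and reduce everything to the single kernel bound $\|(\psi\,e^{-ict|\cdot|^2})^{\vee}\|_{L^1}\le C(1+|t|)^{n/2}$ via stationary phase; this is essentially the multiplier-theoretic proof of B\'enyi--Gr\"ochenig--Okoudjou--Rogers and Wang--Zhao--Guo rather than the wave-packet proof underlying this paper. The two approaches locate the exponent $n/2$ in the same phenomenon (the Hessian of the chirp phase has determinant $\sim t^n$, equivalently the free packet spreads linearly in $t$), but yours requires a quantitative oscillatory-integral estimate and the translation-invariance of the partition ($\sigma_k=\sigma_0(\cdot-k)$, so that the localized kernel norm is $k$-independent), whereas the paper's route trades that for the change-of-window lemma and yields, as a bonus, the exact equality of Theorem C; on the other hand your argument is self-contained and does not need the evolved window at all. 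The only points you should make explicit in a full write-up are the standard equivalence of $\|\cdot\|_{M^{p,q}_{\varphi_0}}$ with the uniform-decomposition norm and the uniformity in $x$ of the stationary-phase bound $|g_t(x)|\lesssim|t|^{-n/2}$ on the region $|x|\lesssim|t|$; neither is a gap.
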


\begin{theoremB}
(Wang-Hudzik \cite{Wang-Hudzik}) 
Let $2 \leq p \leq \infty$, 
 $1\le q\le \infty$, $1/p+1/p'=1$ and 
 $\varphi_0 \in {\mathcal S}({\mathbf R}^n) \setminus \{ 0 \}$.
Suppose $V(t,x)=0$.
Then  there exists positive constants $C$ and $C'$ such that 
    \begin{align*}
    \| u(t,\cdot)\|_{M^{p,q}_{\varphi_0}}\le C (1+|t|)^{-n(1/2-1/p)}
    \| u_0\|_{M^{p',q}_{\varphi_0}},
    \quad u_0\in\mathcal{S}(\re^n)
    \end{align*}
 and 
    \begin{align*}
    \| u(t,\cdot)\|_{M^{p,q}_{\varphi_0}}\le C' (1+|t|)^{n(1/2-1/p)}
    \| u_0\|_{M^{p,q}_{\varphi_0}},
    \quad u_0\in\mathcal{S}(\re^n)
    \end{align*}
 for all $t\in\re$, where $u(t,x)$ is the solution of \eqref{SE} with 
 $u(0,x)=u_0(x)$. 
\end{theoremB}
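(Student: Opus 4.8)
The plan is to reduce both bounds to a single uniform family of frequency-localized dispersive estimates for the free propagator $e^{it\Delta/2}$, and then to sum over the frequency blocks. First I would replace the window norm $\|\cdot\|_{M^{p,q}_{\varphi_0}}$ by the equivalent frequency-uniform decomposition norm
\[
\|f\|_{M^{p,q}}\approx\Bigl(\sum_{k\in\mathbb{Z}^n}\|P_k f\|_{L^p}^q\Bigr)^{1/q},
\]
where $P_k$ is the frequency-uniform decomposition operator localizing frequencies to the unit cube $Q_k=k+[-1/2,1/2)^n$; this equivalence is standard and independent of $\varphi_0$. Since $e^{it\Delta/2}$ is itself a Fourier multiplier it commutes with each $P_k$, so the problem becomes block-diagonal,
\[
\|u(t,\cdot)\|_{M^{p,q}}^q\approx\sum_{k\in\mathbb{Z}^n}\bigl\|e^{it\Delta/2}P_k u_0\bigr\|_{L^p}^q,
\]
and it suffices to estimate one block at a time.

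Second I would prove, for a function $g$ whose Fourier transform is supported in $Q_k$, the two bounds
\[
\bigl\|e^{it\Delta/2}g\bigr\|_{L^p}\lesssim(1+|t|)^{-n(1/2-1/p)}\|g\|_{L^{p'}},\qquad \bigl\|e^{it\Delta/2}g\bigr\|_{L^p}\lesssim(1+|t|)^{n(1/2-1/p)}\|g\|_{L^{p}},
\]
with constants independent of $k$. The uniformity in $k$ comes from the affine change of frequency variable $\xi=k+\eta$: on $Q_k$ one has $e^{it|\xi|^2/2}=e^{it|k|^2/2}\,e^{itk\cdot\eta}\,e^{it|\eta|^2/2}$, where $e^{it|k|^2/2}$ is a unimodular constant and $e^{itk\cdot\eta}$ is the symbol of a translation, neither of which affects an $L^p$ norm. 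Hence it is enough to treat the model multiplier $e^{it|\eta|^2/2}$ acting on functions band-limited to the fixed cube $Q_0$.

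Third, I would obtain the two model estimates by Riesz--Thorin interpolation. In both cases the $p=2$ endpoint is the $L^2$ isometry $\|e^{it\Delta/2}g\|_{L^2}=\|g\|_{L^2}$, with exponent $0$. The second endpoint is $\|e^{it\Delta/2}g\|_{L^\infty}\lesssim(1+|t|)^{-n/2}\|g\|_{L^1}$ for the first estimate and $\|e^{it\Delta/2}g\|_{L^\infty}\lesssim(1+|t|)^{n/2}\|g\|_{L^\infty}$ for the second. Both reduce to the truncated kernel $K_t$ with $\widehat{K_t}(\eta)=\chi(\eta)e^{it|\eta|^2/2}$, $\chi$ a smooth cutoff equal to $1$ on $Q_0$, via the bounds $\|K_t\|_{L^\infty}\lesssim(1+|t|)^{-n/2}$ and $\|K_t\|_{L^1}\lesssim(1+|t|)^{n/2}$. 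This kernel analysis is where the real work lies and is the main obstacle: for $|t|\le1$ the kernel is a bounded smooth perturbation of the Schwartz function $\check\chi$, so $\|K_t\|_{L^\infty}+\|K_t\|_{L^1}\lesssim1$, while for $|t|\ge1$ stationary phase applied to $K_t(x)=\int\chi(\eta)e^{i(t|\eta|^2/2+x\cdot\eta)}\,d\eta$ (nondegenerate Hessian $t\,\mathrm{Id}$, stationary point $\eta=-x/t$) gives amplitude $|K_t(x)|\lesssim|t|^{-n/2}$ and effective support $\{|x|\lesssim|t|\}$ of measure $\sim|t|^n$, yielding exactly the two kernel bounds. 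Interpolating with $\theta=1-2/p$ then produces the exponents $-n(1/2-1/p)$ and $+n(1/2-1/p)$ respectively, together with the domain space $L^{p'}$ in the first estimate.

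Finally, inserting the two model estimates into the block-diagonal identity and summing the $q$-th powers over $k$ yields
\[
\|u(t,\cdot)\|_{M^{p,q}}\lesssim(1+|t|)^{-n(1/2-1/p)}\|u_0\|_{M^{p',q}} \quad\text{and}\quad \|u(t,\cdot)\|_{M^{p,q}}\lesssim(1+|t|)^{n(1/2-1/p)}\|u_0\|_{M^{p,q}},
\]
which are the two asserted bounds. I note that the growth estimate refines Theorem~A in the range $2\le p\le\infty$, the gain $n(1/2-1/p)\le n/2$ being precisely the effect of interpolating against the $L^2$ isometry rather than relying on the $L^\infty\to L^\infty$ growth alone.
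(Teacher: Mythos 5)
Your argument is correct and complete in outline: the reduction to frequency-uniform blocks, the Galilean reduction of each block to the fixed cube $Q_0$ (using $|k+\eta|^2=|k|^2+2k\cdot\eta+|\eta|^2$, so the extra factors are a unimodular constant and a translation), the three endpoint estimates for the truncated kernel $K_t$, and Riesz--Thorin with $\theta=1-2/p$ all fit together and yield both bounds, the hypothesis $2\le p\le\infty$ entering exactly where you interpolate against the $L^1\to L^\infty$ endpoint. This is essentially the original proof of Wang--Hudzik (and Wang--Zhao--Guo), based on the isometric/frequency-uniform decomposition. The paper under review, however, does not prove Theorem B at all: it cites it and remarks only that its own Theorem C covers it, the derivation being carried out in \cite{K-K-I 1}. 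That route is genuinely different from yours: one first proves the exact identity $\|u(t,\cdot)\|_{M^{p,q}_{\varphi(t,\cdot)}}=\|u_0\|_{M^{p,q}_{\varphi_0}}$ with the \emph{time-evolved} window $\varphi(t,\cdot)=e^{\frac{1}{2}it\Delta}\varphi_0$ (the $V=0$ case of the wave packet computation in Section 4, where the remainder $Ru$ vanishes and the transport equation is solved exactly), and the polynomial factors $(1+|t|)^{\pm n(1/2-1/p)}$ then arise solely from changing the window back from $\varphi(t,\cdot)$ to $\varphi_0$ and from passing between $M^{p',q}$ and $M^{p,q}$. Your approach buys a self-contained classical proof via dispersive estimates and interpolation; the paper's approach buys an exact conservation law that is lossless if one accepts the moving window and that extends to the quadratic and sub-quadratic potentials of the paper's main theorems, where no explicit dispersive kernel is available. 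One small point of care in your write-up: the equivalence $\|f\|_{M^{p,q}}\approx(\sum_{k}\|P_kf\|_{L^p}^q)^{1/q}$ at the endpoints $p=1,\infty$ requires the $P_k$ to be smooth multipliers subordinate to the unit cubes rather than sharp cutoffs, which is consistent with your later use of a smooth $\chi$ but should be stated explicitly.
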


The studies of this theme have been developed by a number of authors
 using a large variety of methods (see, for example,  B\'{e}nyi-Okoudjou
 \cite{Benyi-Okoudjou},
 Cordero-Nicola \cite{Cordero Nicola JFA 2008}, 
\cite{Cordero Nicola MN 2008},
 Kobayashi-Sugimoto \cite{KS}, Miyachi-Nicola-Rivetti-Tabacco-Tomita
 \cite{Miyachi-NicolaRivetti-Tabacco-Tomita}, Tomita \cite{Tomita}, 
 Wang-Huang \cite{Wang Huang}). 

In our previous papers, we have the following estimates.

\begin{theoremC}
\label{free}
(Kato-Kobayashi-Ito \cite{K-K-I 1}, \cite{K-K-I 3}, \cite{K-K-I 2})
Let $1 \leq p,q \leq \infty$
 and $\varphi_0 \in {\mathcal S}({\mathbf R}^n) \setminus \{ 0 \}$.
   \begin{enumerate}
   \item [$(i)$] Suppose $V(t,x)=0$. Then
      \begin{align*}
      \| u(t, \cdot) \|_{M^{p,q}_{\varphi(t,\cdot)}}
      = \| u_0 \|_{M^{p,q}_{\varphi_0}},
      \quad u_0 \in {\mathcal S}({\mathbb R}^n)
      \end{align*}    
    holds for all $t\in\re$.
   \item[$(ii)$] Suppose $V(t,x)= \pm \frac{1}{2}|x|^2$. Then
      \begin{align*}
      \| u(t,\cdot) \|_{M^{p,p}_{\varphi(t,\cdot)}}= \| u_0 
      \|_{M^{p,p}_{\varphi_0}},\quad u_0 \in {\mathcal S}({\mathbb R}^n)
      \end{align*}
    holds for all $t\in\re$. 
   \end{enumerate}
In $(i)$ and $(ii)$, $u(t,x)$ and $\varphi(t,x)$ denote the solutions 
 of \eqref{SE} with $u(0,x)=u_0(x)$ and $\varphi(0,x)=\varphi_0(x)$.
\end{theoremC}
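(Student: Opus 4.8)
The plan is to reduce both identities to the behaviour of the \emph{wave packet transform} (short-time Fourier transform) under the Schr\"odinger flow, since the modulation norm is defined through it: for a window $\psi\in\mathcal S(\re^n)\setminus\{0\}$,
\[
\|f\|_{M^{p,q}_\psi}=\left\|\,\|W_\psi f(\cdot,\xi)\|_{L^p_x(\re^n)}\,\right\|_{L^q_\xi(\re^n)},
\qquad
W_\psi f(x,\xi)=\int_{\re^n}\overline{\psi(y-x)}\,e^{-iy\cdot\xi}\,f(y)\,dy .
\]
The heart of the matter is a covariance identity: because $u(t,\cdot)$ and the window $\varphi(t,\cdot)$ solve the \emph{same} equation \eqref{SE}, their joint wave packet transform is transported by the classical Hamiltonian flow $S_t$ generated by the symbol $\tfrac12|\xi|^2+V(t,x)$, up to a factor of modulus one:
\[
\bigl|\,W_{\varphi(t,\cdot)}u(t,\cdot)(x,\xi)\,\bigr|
=\bigl|\,W_{\varphi_0}u_0\bigl(S_t^{-1}(x,\xi)\bigr)\,\bigr| .
\]

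To obtain this I would set $F(t,x,\xi):=W_{\varphi(t,\cdot)}u(t,\cdot)(x,\xi)$ and differentiate in $t$. Inserting \eqref{SE} for both $u$ and $\varphi$ and integrating by parts in $y$ (legitimate since the sub-quadratic bound \eqref{assumption_V} keeps the propagator on $\mathcal S(\re^n)$), one finds that $F$ solves a first-order transport equation in $(x,\xi)$ whose characteristics are precisely the Hamilton equations $\dot x=\xi$, $\dot\xi=-\partial_xV$, and whose zeroth-order coefficient is purely imaginary; hence $|F|$ is constant along trajectories, which is exactly the displayed identity. (Equivalently, as $V$ is at most quadratic the propagator is metaplectic and the identity is its symplectic covariance.) For $V=0$ one has $S_t^{-1}(x,\xi)=(x-t\xi,\xi)$, while for $V=\pm\tfrac12|x|^2$ the inverse flow is the rotation, respectively hyperbolic rotation,
\[
(x,\xi)\mapsto(x\cos t-\xi\sin t,\ x\sin t+\xi\cos t),
\qquad
(x,\xi)\mapsto(x\cosh t-\xi\sinh t,\ -x\sinh t+\xi\cosh t),
\]
each a linear symplectomorphism of $\re^{2n}$ with Jacobian $1$.

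The two norm equalities then follow by a change of variables. For $(i)$ the map $(x,\xi)\mapsto(x-t\xi,\xi)$ leaves $\xi$ fixed and only translates $x$, so for each $\xi$ the inner $L^p_x$-norm is unchanged and the outer $L^q_\xi$-norm is untouched; equality therefore holds for \emph{all} $1\le p,q\le\infty$. For $(ii)$ the inverse flow genuinely mixes $x$ and $\xi$, so the fibered $L^p_xL^q_\xi$ structure is not preserved; however, when $p=q$ the modulation norm collapses to $\|W_{\varphi_0}u_0\|_{L^p(\re^{2n})}$, and this full norm is invariant under the measure-preserving linear map $S_t^{-1}$. This dichotomy is exactly why the free case admits the whole range of indices whereas the quadratic case is restricted to $p=q$.

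The step I expect to be the main obstacle is the covariance identity itself: one must justify differentiation under the integral and the integrations by parts, identify the transport equation together with the correct real phase term, and verify that its characteristic flow is exactly $S_t$ with unit Jacobian. Once this intertwining of the Schr\"odinger evolution with the classical flow is established, parts $(i)$ and $(ii)$ are immediate from the invariance properties of the respective flows recorded above.
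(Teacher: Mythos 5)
Your proposal is correct and follows essentially the same route as the paper and the cited works \cite{K-K-I 1}, \cite{K-K-I 3}: apply the wave packet transform with a window evolving under the same equation, observe that for $V$ at most quadratic the resulting equation for $W_{\varphi(t,\cdot)}u(t,\cdot)$ is an exact first-order transport equation along the Hamiltonian flow with a real phase $h$ (so the remainder $Ru$ of \eqref{V_part} is absent and $|W_{\varphi(t,\cdot)}u(t,x,\xi)|=|W_{\varphi_0}u_0(S_t^{-1}(x,\xi))|$), and then conclude by the unit-Jacobian change of variables, which preserves the mixed $L^p_xL^q_\xi$ norm for the shear $(x,\xi)\mapsto(x-t\xi,\xi)$ but only the $L^p_{x,\xi}$ norm for the rotation/hyperbolic rotation. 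This is precisely the mechanism carried out in Section 4 of the paper (formulas \eqref{kainohyouji} and \eqref{I1}), specialized to the cases where the Taylor remainder vanishes.
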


We remark that Theorem C covers Theorem A and B (see \cite{K-K-I 1}).

To state our results, we define the Schr\"odinger operator of a free
 particle  $e^{\frac{1}{2}it\Delta}$ by 
    $$(e^{\frac{1}{2}it\Delta}f)(x)=
    \mathcal{F}^{-1}_{\xi\to x}[e^{-\frac{1}{2}it|\xi|^2}
    \mathcal{F}f(\xi)](x), \quad f\in\mathcal{S}(\re^n).$$ 
Here we use the notation
 $\mathcal{F}f(\xi)=\int_{\re^n}f(x)e^{-ix\cdot\xi}dx$ for the Fourier
 transform of $f$ and 
 $\mathcal{F}^{-1}f(x)=\int_{\re^n} f(\xi)e^{ix\cdot\xi}\dbar\xi$ with
 $\dbar\xi =(2\pi)^{-n}d\xi$ for the inverse Fourier transform of $f$.
The following theorems are our main results.

\begin{theorem}
\label{p-p estimate}
Let $1\le p \le \infty$, 
 $\varphi_0\in\mathcal{S}(\re^n)\backslash\{ 0\}$ and $T>0$.
Set $\varphi (t,x)=e^{\frac{1}{2}it\Delta} \varphi_0(x)$.
If $V\in C^{\infty} (\re\times\re^{n})$ satisfies \eqref{assumption_V}
 for all multi-indices $\alpha$ with $|\alpha |\ge 2$, then 
 there exists $C_{T}>0$ such that 
    \begin{align*}
    \| u(t,\cdot) \|_{M^{p,p}_{\varphi (t,\cdot)}}
    \le C_{T} \|u_0\|_{M^{p,p}_{\varphi_0}},
    \quad u_0\in\mathcal{S}(\re^n)
    \end{align*}
 for all $t\in [-T,T]$, where $u(t,x)$ denotes the solution of
 \eqref{SE} in $C(\re ;L^2(\re^n))$
 with $u(0,x)=u_0(x)$.
\end{theorem}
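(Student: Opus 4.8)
The plan is to pass to the wave packet transform and to exploit that, when $p=q$, the modulation norm is nothing but an $L^p$ norm over the whole phase space $\re^{2n}$, on which the classical Hamiltonian flow acts by measure--preserving maps. Concretely, I would first use the identification $\|f\|_{M^{p,p}_{\varphi}}=c\,\|W_{\varphi}f\|_{L^p(\re^{2n})}$, where
\[
W_{\varphi}f(x,\xi)=\int_{\re^n}\overline{\varphi(y-x)}\,e^{-iy\cdot\xi}f(y)\,dy
\]
is the wave packet transform and $c$ is a harmless constant. Writing $F(t,x,\xi)=W_{\varphi(t,\cdot)}u(t,\cdot)(x,\xi)$, the assertion becomes $\|F(t)\|_{L^p(\re^{2n})}\le C_T\,\|F(0)\|_{L^p(\re^{2n})}$ for $t\in[-T,T]$, since $F(0)=W_{\varphi_0}u_0$.

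\emph{Transport equation.} Next I would differentiate $F$ in $t$. Because $u$ solves \eqref{SE} and the window $\varphi(t,\cdot)=e^{\frac12 it\Delta}\varphi_0$ solves the free Schr\"odinger equation, the two occurrences of $-\tfrac12\Delta$ cancel after one integration by parts and leave the free transport term $-\xi\cdot\nabla_x F$, together with a purely imaginary multiple of $F$. The potential produces $-i\int\overline{\varphi(t,y-x)}V(t,y)u(t,y)e^{-iy\cdot\xi}\,dy$; Taylor expanding $V(t,y)$ about the packet centre $y=x$ to first order, the term $V(t,x)$ and the term $\nabla_xV(t,x)\cdot(y-x)$ contribute, via $y_je^{-iy\cdot\xi}=i\partial_{\xi_j}e^{-iy\cdot\xi}$, the transport term $\nabla_xV(t,x)\cdot\nabla_\xi F$ plus further imaginary multiples of $F$, while the second--order Taylor remainder $R(t,x,y-x)$, controlled by $\sup_{|\alpha|\ge2}\|\partial_x^\alpha V\|_{\infty}$, yields a remainder operator $B(t)$. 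Collecting terms, $F$ solves
\[
\partial_tF=\bigl(-\xi\cdot\nabla_x+\nabla_xV(t,x)\cdot\nabla_\xi\bigr)F+i\,\gamma(t,x,\xi)F+B(t)F,
\]
with $\gamma$ real--valued. The first--order part is minus the Hamiltonian vector field of $H(t,x,\xi)=\tfrac12|\xi|^2+V(t,x)$, so it generates composition with the bicharacteristic flow $\chi_t$, while $i\gamma$ generates a modulus--one phase.

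\emph{Flow conjugation and Gronwall.} Since $V$ has bounded derivatives of order $\ge 2$, the Hamiltonian vector field has at most linear growth and $\chi_t$ is a globally defined canonical transformation; by Liouville's theorem it preserves Lebesgue measure on $\re^{2n}$, so $g\mapsto g\circ\chi_t$ is an isometry of $L^p(\re^{2n})$. Putting $G(t,\cdot)=F(t,\chi_t(\cdot))$ removes the transport term and the phase, giving $\partial_tG=\widetilde{B}(t)G$ with $\widetilde B(t)$ the conjugate of $B(t)$ by this isometry, whence $\|\widetilde B(t)\|_{L^p\to L^p}=\|B(t)\|_{L^p\to L^p}$. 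Therefore $\|F(t)\|_{L^p}=\|G(t)\|_{L^p}$ obeys $\|G(t)\|_{L^p}\le\|G(0)\|_{L^p}+\int_0^t\|B(s)\|_{L^p\to L^p}\|G(s)\|_{L^p}\,ds$, and Gronwall's inequality yields $\|F(t)\|_{L^p}\le e^{C|t|}\|F(0)\|_{L^p}$, i.e.\ one may take $C_T=e^{CT}$. The hypothesis $p=q$ is essential here: the flow $\chi_t$ mixes the $x$-- and $\xi$--variables, so only the unmixed norm $L^p(\re^{2n})$ is preserved.

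\emph{Main obstacle.} The technical heart is the boundedness of the remainder operator $B(t)$ on $L^p(\re^{2n})$, uniformly for $t\in[-T,T]$. Its kernel is a partial Fourier transform of $\overline{\varphi(t,y-x)}R(t,x,y-x)$, and although the factor $R=O(|y-x|^2)$ grows, it is tamed by the rapid decay of the Schwartz window $\varphi(t,\cdot)$; because $\varphi(t,\cdot)$ spreads as $|t|$ increases, the resulting bound depends on $T$, which is exactly why the constant is $C_T$. Establishing the required kernel estimates --- using the boundedness of $\partial_x^\alpha V$ for all $|\alpha|\ge2$ to control the derivatives of $R$, together with quantitative decay estimates for $\varphi(t,\cdot)$ and its derivatives, presumably isolated in the auxiliary lemmas --- is the main work of the argument.
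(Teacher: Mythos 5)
Your strategy coincides with the paper's: pass to the wave packet transform with the evolved window $\varphi(t,\cdot)=e^{\frac12 it\Delta}\varphi_0$ so the Laplacian terms cancel, Taylor-expand $V$ at the packet centre to extract the transport field $-\xi\cdot\nabla_x+\nabla_xV(t,x)\cdot\nabla_\xi$ plus a real phase plus a second-order remainder, remove the transport and phase by composing with the Hamiltonian flow (the paper writes this as the method of characteristics and the explicit Duhamel formula \eqref{kainohyouji}), use Liouville's theorem (Lemma \ref{hensuuhenkan}, proved in the Appendix together with the $C^1$-dependence on initial data that justifies the change of variables) to see that this composition is an $L^p(\re^{2n})$-isometry, and close with Gronwall. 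All of the steps you actually carry out are correct, and your observation about why $p=q$ is essential matches the discussion after Theorem \ref{p-p estimate}.

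The genuine gap is the step you defer, namely the uniform $L^p(\re^{2n})$-boundedness of the remainder, and as you have set it up it is not merely a ``kernel estimate.'' The remainder $Ru(t,x,\xi)=\sum_{j,k}\int\overline{\varphi(t,y-x)}V_{jk}(t,x,y)(y_j-x_j)(y_k-x_k)u(t,y)e^{-iy\cdot\xi}dy$ acts on the solution $u$ itself, not on the phase-space function $F=W_{\varphi(t,\cdot)}u$, so $B(t)$ is not yet an operator on $L^p(\re^{2n})$; to close the Gronwall loop one must first re-insert the inversion formula $u=\|\varphi(t,\cdot)\|_{L^2}^{-2}W_{\varphi(t,\cdot)}^*W_{\varphi(t,\cdot)}u$, which turns $Ru$ into a triple integral against $W_{\varphi(t,\cdot)}u(t,z,\eta)$. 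That integral is not absolutely convergent in $\eta$, so rapid decay of the window alone does not suffice: the paper integrates by parts in $y$ with $(1-\Delta_y)^N$, $2N>n$, using $(1-\Delta_y)^Ne^{iy\cdot(\eta-\xi)}=\bra{\eta-\xi}^{2N}e^{iy\cdot(\eta-\xi)}$, to manufacture the integrable weight $\bra{\eta-\xi}^{-2N}$; only then do the Schwartz bounds on $\partial_y^{\beta}\varphi_{jk}(t,\cdot)$, the bounds $|\partial_y^{\beta}V_{jk}|\le C_\beta$ coming from \eqref{assumption_V} with $|\alpha|\ge2$, the measure-preserving change of variables along the flow, and Young's inequality give $\|Ru(\tau,f(\tau;t,\cdot),g(\tau;t,\cdot))\|_{L^p_{x,\xi}}\le C_T\|u(\tau,\cdot)\|_{M^{p,p}_{\varphi(\tau,\cdot)}}$. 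Supplying the inversion step and the $(1-\Delta_y)^N$ integration by parts would complete your argument.
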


In the above theorem, we cannot expect to replace the $M^{p,p}$
 norm with the $M^{p,q}$ norm.
In fact, when $V(t,x)=\frac{1}{2}|x|^2$ we have
    \begin{align*}
    \| u(t,\cdot)\|_{M^{p,q}_{\varphi (t,\cdot)}}=
    \| \|W_{\varphi_0} u_0(x\cos t-\xi\sin t, x\sin t+\xi \cos t)
    \|_{L^p_x}\|_{L^q_\xi}
    \end{align*}
 and thus
    \begin{align*}
    \left\| u\left(\frac{\pi}{2},\cdot\right)
    \right\|_{M^{p,q}_{\varphi (\frac{\pi}{2},\cdot)}}=
    \| \|W_{\varphi_0} u_0(\xi, x)\|_{L^p_x}\|_{L^q_\xi}
    \end{align*}
(refer to \cite{K-K-I 3}), but $\| \|W_{\varphi_0} u_0(\xi, x)
\|_{L^p_x}\|_{L^q_\xi}\le C \| \|W_{\varphi_0} u_0(x, \xi)
\|_{L^p_x}\|_{L^q_\xi}$ does not hold generally.
However, if we strengthen the assumption of $V$ then we can replace the 
 $M^{p,p}$ norm with $M^{p,q}$ norm.

\begin{theorem}
\label{p-q estimate}
Let $1\le p,q\le \infty$, 
 $\varphi_0\in\mathcal{S}(\re^n)\backslash\{ 0\}$ and  $T>0$.
Set $\varphi (t,x)=e^{\frac{1}{2}it\Delta} \varphi_0(x)$.
If $V\in C^\infty (\re\times \re^n)$ satisfies \eqref{assumption_V} 
 for all multi-indices $\alpha$ with $|\alpha|\ge 1$,
 then there exists $C_{T}>0$ such that 
    \begin{align*}
    \| u(t,\cdot) \|_{M^{p,q}_{\varphi (t,\cdot)}}
    \le C_{T} \|u_0\|_{M^{p,q}_{\varphi_0}},
    \quad u_0\in \mathcal{S}(\re^n)
    \end{align*}
 for all $t\in [-T,T]$, where $u(t,x)$ denotes the solution of
 \eqref{SE} in $C(\re ;L^2(\re^n))$ with $u(0,x)=u_0(x)$.
\end{theorem}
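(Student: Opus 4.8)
The plan is to pass to the wave packet (short-time Fourier) side and to exploit that the window $\varphi(t,\cdot)=e^{\frac{1}{2}it\Delta}\varphi_0$ is itself a solution of the free Schr\"odinger equation. Writing $\|f\|_{M^{p,q}_\psi}=\bigl\|\,\|W_\psi f(\cdot,\xi)\|_{L^p_x}\bigr\|_{L^q_\xi}$ for the wave packet transform $W_\psi$, I would set $U(t,x,\xi)=W_{\varphi(t,\cdot)}u(t,\cdot)(x,\xi)$ and reduce the theorem to the uniform bound $\|U(t)\|_{L^q_\xi L^p_x}\le C_T\|U(0)\|_{L^q_\xi L^p_x}$ for $t\in[-T,T]$.

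First I would derive the evolution equation for $U$. Differentiating in $t$ and inserting \eqref{SE} for $u$ together with the free equation for $\varphi$, the second-order part organizes into a first-order transport operator along the classical Hamiltonian flow of the full symbol $\frac{1}{2}|\xi|^2+V(t,x)$, namely the characteristics $\dot x=\xi$, $\dot\xi=-\nabla_x V(t,x)$, modulated by a real phase; all terms beyond the quadratic Taylor polynomial of $V$ at the wave-packet centre collect into a remainder $R(t)U$. Because the phase is real it does not affect $|U|$, and because $|\partial^\alpha_x V|\le C_\alpha$ for $|\alpha|\ge2$ (which both hypotheses supply) the remainder, acting through the rapidly decaying window $\varphi_0$, is bounded on $L^q_\xi L^p_x$ uniformly on $[-T,T]$. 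This part runs in parallel with the $p=p$ statement of Theorem \ref{p-p estimate}.

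The heart of the matter is the principal transport, whose solution is $U(0)$ composed with the backward Hamiltonian flow $\chi_t^{-1}$. Using $|\partial^\alpha_x V|\le C_\alpha$ now for all $|\alpha|\ge1$, a Gronwall argument on $[-T,T]$ shows that $\chi_t^{-1}$ has the form $(x,\xi)\mapsto(x-t\xi+a(t,x,\xi),\ \xi+b(t,x,\xi))$ with $a,b$ and all their derivatives bounded; the boundedness of the momentum displacement $b$ is exactly what the gradient bound $|\alpha|\ge1$ buys, and it fails under the weaker hypothesis of Theorem \ref{p-p estimate} (for $V=\frac{1}{2}|x|^2$ the flow is a rotation and $b$ grows linearly in $x$). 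I would then prove that such a map preserves $L^q_\xi L^p_x$ up to a constant $C_T$ by discretizing phase space into unit cubes and passing to the equivalent discrete modulation norm: the flow, having Jacobian $1$ and bounded derivatives, sends each cube to a set of unit volume and bounded eccentricity, so it produces only finite overlap; the shear $x\mapsto x-t\xi$ is absorbed by the translation invariance of $\ell^p$ in the position index, while the bounded displacement $b$ is absorbed by finite overlap in the momentum index, giving the $L^q_\xi L^p_x$ bound. By contrast, when $b$ is unbounded only the isotropic case $p=q$, where $L^p_{x,\xi}$ is invariant under the volume-preserving $\chi_t$, survives, which is precisely the dividing line between the two theorems.

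The main obstacle is this last mixed-norm change-of-variables estimate: unlike the free case, where $\chi_t^{-1}$ fixes the $\xi$-coordinate and the argument is immediate, here the momentum is genuinely displaced and one must keep the displacement bounded (hence the $|\alpha|\ge1$ hypothesis) and control the anisotropic overlap. Once both the transport propagator and the remainder $R(t)$ are shown bounded on $L^q_\xi L^p_x$ uniformly on $[-T,T]$, Gronwall's inequality applied to the Duhamel form of the equation for $U$ yields $\|U(t)\|_{L^q_\xi L^p_x}\le e^{C_T|t|}\|U(0)\|_{L^q_\xi L^p_x}$, which is the assertion after returning to the modulation norm.
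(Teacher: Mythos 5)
Your reduction to the wave packet side, the transport equation along the Hamiltonian flow of $\frac{1}{2}|\xi|^2+V$, and the Duhamel--Gronwall closing step all coincide with the paper's (its representation \eqref{kainohyouji}); likewise, your key structural observation that the backward flow is the free shear $(x,\xi)\mapsto(x-t\xi,\xi)$ plus a perturbation bounded on $[-T,T]$ exactly because $|\nabla_xV|$ is bounded is the content of the paper's Lemma \ref{bra-estimate} (compare \eqref{f(s)}). Where you genuinely diverge is in how the mixed norm is controlled under composition with this map. The paper never proves a composition estimate for general $(p,q)$: it treats the endpoints $(p,q)=(\infty,1)$ and $(1,\infty)$ by hand, inserting the weight comparisons \eqref{y-f(s)} and \eqref{eta-g(s)} into the kernels coming from the inversion formula and using Fubini and Young, and then reaches general $(p,q)$ by complex interpolation against the diagonal case of Theorem \ref{p-p estimate}. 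You instead argue directly for all $(p,q)$ by discretizing phase space into unit cubes and using finite overlap, the position shift $-t\xi$ being absorbed by translation invariance of $\ell^p$ in the position index and the bounded momentum displacement by finite overlap in the momentum index. This is a viable and arguably cleaner route, since it avoids the interpolation step (which in the paper must be run in $\mathcal{M}^{p,q}$, the closure of the Schwartz class, as both endpoints involve an index equal to $\infty$); but it rests on an ingredient you should make explicit, namely the equivalence of $\|F\|_{L^q_\xi L^p_x}$ with its discretized version $\bigl\|\bigl(\sup_{Q_k\times Q_l}|F|\bigr)_{k,l}\bigr\|_{\ell^q_l\ell^p_k}$, which fails for general measurable $F$ and must be justified by the amalgam-space property of $W_{\varphi_0}u_0$ (and of the remainder, which is a convolution-dominated kernel applied to $W_{\varphi(\tau,\cdot)}u$). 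Note also that the remainder term is itself evaluated along the flow, so it too requires your finite-overlap lemma rather than the Jacobian-one change of variables that suffices for Theorem \ref{p-p estimate}; saying this part ``runs in parallel'' with the diagonal case understates what is needed there. With those two points supplied, your argument is a legitimate alternative to the paper's endpoint-plus-interpolation scheme.
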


\begin{remark}
In Theorem \ref{p-p estimate} and Theorem \ref{p-q estimate},
 we assume $V(t,x)\in C^\infty (\re\times \re^n)$ but, in fact, it is
 enough to assume $V(t,x)$ is $C^2$-function in $t$ and 
 $C^{2[\frac{n}{2}]+4}$-function in $x$.
\end{remark}

This paper is organized as follows.
In Section 2, we give some notations and recall the definitions and 
 basic properties of wave packet transform and modulation spaces.
In Section 3, we give some properties concerning the orbit of the classical
 mechanics corresponding to the Schr\"odinger equation \eqref{SE}.
In Section 4, we prove Theorem \ref{p-p estimate}.
Finally, in Section 5, we prove Theorem \ref{p-q estimate}.

\section{Preliminaries}\label{Preliminaries}

\subsection{Notations}
For $x=(x_1,\ldots ,x_n)\in\re^n$ and a $m\times n$ matrix $A=(a_{ij})$,
 we denote
$$\bra{x}=(1+|x|^2)^{\frac{1}{2}},\quad
\|x\|_\infty =\underset{1\le j\le n}{\rm max}|x_j| \quad
{\text{and}}\quad 
 \|A\|_\infty=\underset{1\le j\le m,1\le k\le n}{\rm max}|a_{jk}|.$$
For a real valued function $V\in C^1(\re\times \re^n)$,
we put
$$\nabla_x V(t,x_1,\ldots ,x_n)=(\partial_{x_1}V(t,x_1,\ldots ,x_n)
 ,\ldots , \partial_{x_n}V(t,x_1,\ldots ,x_n) ).$$
Throughout this paper the letter $C$
denotes a constant, which may be different in each occasion.

\subsection{Wave Packet Transform}\label{Wave Packet Transform}
We recall the definition of the wave packet transform which is defined
 by C\'ordoba-Fefferman \cite{C-F-1}. Wave packet transform is called
 short time Fourier transform or windowed Fourier transform in several
 literatures. Let $f \in {\mathcal S}^\prime({\mathbb R}^n)$ and 
 $\varphi \in {\mathcal S}({\mathbb R}^n)\backslash \{0\}$. Then the
 wave packet transform $W_\varphi f(x,\xi)$ of $f$ with the wave packet
 generated by a function $\varphi$ is defined by 
    \begin{equation*}
    W_\varphi f(x,\xi) = \int_{{\mathbb R}^n} 
    \overline{ \varphi(y-x)} f(y) e^{-i y \cdot \xi} dy.
    \end{equation*} 
We call such $\varphi$ window function. Let $F$ be a function on
 ${\mathbb R}^n \times {\mathbb R}^n$. Then the (informal) adjoint
 operator $W^{*}_\varphi$ of $W_\varphi$ is defined by
    $$W^{*}_\varphi F(x)=\iint_{{\mathbb R}^{2n}} F(y,\xi)
    \varphi(x-y)  e^{ix \cdot \xi} dy \dbar\xi$$
with $\dbar \xi = (2 \pi)^{-n} d \xi$. It is known that for $\varphi,
 \psi \in {\mathcal S}({\mathbb R}^n)\backslash\{ 0\}$ satisfying
 $\langle \psi,\varphi \rangle \not=0$, we have the inversion formula
    \begin{equation*}
    \frac{1}{\langle \psi,\varphi \rangle} W_\psi^{*} W_\varphi f = f,
    \quad f \in {\mathcal S}^\prime({\mathbb R}^n) \label{inversion formula}
    \end{equation*}
 (\cite[Corollary 11.2.7]{Grochenig 2001}).

For the sake of convenience, we use the following notation
    \begin{align*}
    W_{\varphi(t,\cdot)}u(t,x,\xi)
    =W_{\varphi(t,\cdot)}[u(t,\cdot)](x,\xi)
    =\int_{\re^n}\overline{\varphi (t,y-x)}u(t,y)e^{-iy\cdot \xi}dy,
    \end{align*}
 where  
 $\varphi (t,x)$ and $u(t,x)$ are functions on  $\re\times\re^n$.

\subsection{Modulation Spaces}\label{MODULATION SPACES}
We recall the definition of modulation spaces $M^{p,q}$.
Let $1 \leq p,q \leq \infty$ and 
 $\varphi \in {\mathcal S}({\mathbb R}^n) \setminus \{ 0 \}$.
Then  the modulation space $M^{p,q}_\varphi({\mathbb R}^n)=M^{p,q}$
 consists of all tempered distributions 
 $f \in {\mathcal S}^\prime({\mathbb R}^n)$ such that the norm
    $$\| f \|_{M^{p,q}_\varphi} =
    \left( \int_{{\mathbb R}^n} \left( \int_{{\mathbb R}^n} |W_\varphi 
    f (x,\xi)|^p  dx  \right)^{q/p} d \xi  \right)^{1/q}
    = \|\| W_\varphi f(x,\xi) \|_{L^p_x}\|_{L^q_\xi}$$
 is finite (with usual modifications  if $p=\infty$ or $q=\infty$).

The space $M^{p,q}_\varphi({\mathbb R}^n)$ is a Banach space,
whose definition is  independent of the choice of the window function
$\varphi$, i.e., $M^{p,q}_\varphi ({\mathbb R}^n)= M^{p,q}_\psi({\mathbb
R}^n)$ for all $\varphi,\psi \in {\mathcal S}({\mathbb  R}^n) \setminus
\{ 0 \}$ (\cite[Theorem 6.1]{Feichtinger}). This property is crucial in the sequel,
since we choose a suitable window function $\varphi$ to estimate the
modulation space norm. If $1 \leq p,q < \infty$ then
${\mathcal S}({\mathbb R}^n)$ is dense in $M^{p,q}$ (\cite[Theorem
6.1]{Feichtinger}). We also note $L^2 =M^{2,2}$, and $M^{p_1,q_1}
\hookrightarrow M^{p_2,q_2}$ if $p_1 \leq p_2, q_1 \leq q_2$
(\cite[Proposition 6.5]{Feichtinger}). Let us define by ${\mathcal
M}^{p,q}({\mathbb R}^n)$ the completion of ${\mathcal S}({\mathbb R}^n)$
under the norm $\| \cdot \|_{M^{p,q}}$. Then ${\mathcal
M}^{p,q}({\mathbb R}^n) = M^{p,q}({\mathbb R}^n)$ for $1 \leq p,q <
\infty$. Moreover, the complex interpolation theory for these spaces
reads as follows: Let $0< \theta <1$ and $1 \leq  p_i , q_i  \leq
\infty, i = 1, 2$. Set $1/p=(1- \theta)/p_1+ \theta/p_2$, $1/q = (1-
\theta)/q_1 + \theta /q_2$, then $({\mathcal M}^{p_1,q_1}, {\mathcal
M}^{p_2,q_2})_{[\theta]}={\mathcal M}^{p,q}$
(\cite[Theorem 6.1]{Feichtinger}, \cite[Theorem 2.3]{Wang Huang}).
We refer to \cite{Feichtinger} and \cite{Grochenig 2001} for more details.

\section{Key Lemmas}
The orbit of the classical mechanics corresponding to \eqref{SE}
 is described by the system of ordinary differential equations
    \begin{align}
    \label{ODE}
    \begin{cases}
    \dfrac{d}{ds}f(s)=g (s), \\
    \dfrac{d}{ds}g (s)=-(\nabla _x V)(s,f(s)),
    \end{cases}
    \end{align}
 where $f:\re\rightarrow \re^n$ and $g:\re\rightarrow \re^n$ (see also 
 Fujiwara \cite{Fujiwara}).
So, we prepare two lemmas which give some properties
 of the solutions to the system of ordinary differential equations.
Following lemma is used in the proof of Theorem \ref{p-p estimate}.

\begin{lemma}
\label{hensuuhenkan}
Let $V\in C^{\infty} (\re\times \re^{n})$ satisfy \eqref{assumption_V}
 for all multi-indices $\alpha$ with $|\alpha|\ge 2$.
Suppose that $f(s;t,x,\xi)$ and $g(s;t,x,\xi)$ are solutions to \eqref{ODE}
 satisfying $f(t)=x$ and $g(t)=\xi$ and put 
    \begin{align*}
    &M(s;t,x,\xi)=(w_{i,j})\notag\\
    &=\!\left(
    \begin{array}{cccccc}
    \!\dfrac{\partial f_1(s;t,x,\xi)}{\partial x_1}\! &\!\!\cdots\!\!
    &\!\dfrac{\partial f_1(s;t,x,\xi)}{\partial x_n}
    &\!\dfrac{\partial f_1(s;t,x,\xi)}{\partial \xi_1} \! &\!\!\cdots\!\!   
    &\!\dfrac{\partial f_1(s;t,x,\xi)}{\partial \xi_n}\!  \\
    \vdots&&\vdots&\vdots&&\vdots\\
    \!\dfrac{\partial f_n(s;t,x,\xi)}{\partial x_1} \! &\!\!\cdots\!\!
    &\!\dfrac{\partial f_n(s;t,x,\xi)}{\partial x_n}
    &\!\dfrac{\partial f_n(s;t,x,\xi)}{\partial \xi_1} \!&\!\!\cdots\!\!
    &\!\dfrac{\partial f_n(s;t,x,\xi)}{\partial \xi_n}\!  \\
    \!\dfrac{\partial g_1(s;t,x,\xi)}{\partial x_1} \! &\!\!\cdots\!\! 
    &\!\dfrac{\partial g_1(s;t,x,\xi)}{\partial x_n}
    &\!\dfrac{\partial g_1(s;t,x,\xi)}{\partial \xi_1} \! &\!\!\cdots\!\! 
    &\!\dfrac{\partial g_1(s;t,x,\xi)}{\partial \xi_n}\!  \\
    \vdots&&\vdots&\vdots&&\vdots\\
    \!\dfrac{\partial g_n(s;t,x,\xi)}{\partial x_1} \! &\!\!\cdots\!\!
    &\!\dfrac{\partial g_n(s;t,x,\xi)}{\partial x_n}
    &\!\dfrac{\partial g_n(s;t,x,\xi)}{\partial \xi_1} \! &\!\!\cdots\!\!
    &\!\dfrac{\partial g_n(s;t,x,\xi)}{\partial \xi_n}\!  \\
    \end{array}
    \right) .
    \end{align*}
Then ${\rm det}M(s;t,x,\xi)=1$ for all $s,t,x$ and $\xi$.
\end{lemma}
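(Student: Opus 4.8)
The plan is to recognize that $(f,g)$ is the Hamiltonian flow generated by $H(s,f,g)=\tfrac{1}{2}|g|^2+V(s,f)$, so that the map $(x,\xi)\mapsto \bigl(f(s;t,x,\xi),g(s;t,x,\xi)\bigr)$ is volume preserving; I would establish this concretely through the variational equation and the Liouville (Jacobi) formula rather than invoking symplectic geometry abstractly.

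First I would differentiate the system \eqref{ODE} with respect to the initial data $x_j$ and $\xi_j$. Writing the vector field of \eqref{ODE} as $X(s,f,g)=\bigl(g,-(\nabla_x V)(s,f)\bigr)$, the matrix $M(s;t,x,\xi)$ is exactly the Jacobian of the solution in its initial conditions, and it satisfies the variational equation
\begin{align*}
\frac{d}{ds}M(s;t,x,\xi)=A(s)\,M(s;t,x,\xi),
\qquad
A(s)=\begin{pmatrix} 0 & I_n \\ -H_V(s) & 0 \end{pmatrix},
\end{align*}
where $H_V(s)=\bigl(\partial_{x_j}\partial_{x_k}V(s,f(s;t,x,\xi))\bigr)_{1\le j,k\le n}$ is the Hessian of $V$ in $x$ along the orbit and each displayed block is $n\times n$. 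The legitimacy of this differentiation --- that is, the $C^1$ (indeed $C^\infty$) dependence of solutions on initial data together with their global existence on all of $\re$ --- is precisely where the hypothesis \eqref{assumption_V} for $|\alpha|\ge 2$ enters: the bound $|\partial^\alpha_x V|\le C_\alpha$ with $|\alpha|=2$ makes $X$ globally Lipschitz in the spatial variable, guaranteeing a complete flow, while the bounds for $|\alpha|\ge 2$ yield the smooth dependence needed to write down $M$ and its linear ODE.

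Next I would apply the Liouville (Abel--Jacobi) formula for the determinant of a solution of a linear matrix ODE,
\begin{align*}
\frac{d}{ds}\,{\rm det}\,M(s;t,x,\xi)={\rm tr}\,A(s)\cdot{\rm det}\,M(s;t,x,\xi).
\end{align*}
Since $A(s)$ is block anti-diagonal, both of its diagonal $n\times n$ blocks vanish, hence ${\rm tr}\,A(s)=0$ for every $s$. Therefore $\frac{d}{ds}{\rm det}\,M(s;t,x,\xi)=0$, so ${\rm det}\,M(\cdot\,;t,x,\xi)$ is constant in $s$.

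Finally I would evaluate at $s=t$. Because $f(t)=x$ and $g(t)=\xi$ are exactly the imposed initial conditions, the Jacobian at $s=t$ is the identity matrix $I_{2n}$, whence ${\rm det}\,M(t;t,x,\xi)=1$; combined with constancy in $s$ this gives ${\rm det}\,M(s;t,x,\xi)=1$ for all $s,t,x,\xi$. The only genuinely delicate point is justifying the variational equation together with the global existence of the flow; once the structural observation ${\rm tr}\,A(s)=0$ is in hand, the conclusion is immediate, and this vanishing of the trace is the true reason the underlying Hamiltonian flow preserves phase-space volume.
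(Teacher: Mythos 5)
Your proof is correct and follows essentially the same route as the paper: both pass to the variational equation for the Jacobian $M$ along the flow, observe that the coefficient matrix has zero trace because it is block anti-diagonal, and conclude via the Liouville formula and $M(t;t,x,\xi)=E_{2n}$. The only cosmetic differences are that the paper derives the Liouville identity by hand (expanding $\frac{d}{ds}\det M$ row by row) and isolates the smooth dependence on initial data as Lemma A.3 in its appendix, which is exactly the ``delicate point'' you correctly flag.
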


It is easy to prove this lemma by the standard method, but we give the
 proof for reader's convenience in Appendix.
Next lemma plays an important role in the proof of Theorem 
\ref{p-q estimate}.

\begin{lemma}
\label{bra-estimate}
Let $f(s;t,x,\xi)$ and $g(s;t,x,\xi)$ be solutions to \eqref{ODE} with
 $f(t)=x$ and $g(t)=\xi$. If $V\in C^\infty (\re\times \re^{n})$ satisfies 
 \eqref{assumption_V} for all multi-indices $\alpha$ with $|\alpha|\ge 1$,
 then there exist $C_1, C_2>0$ such that 
    \begin{align}
    \label{y-f(s)}
    \dfrac{1}{\bra{y-f(s;t,x,\xi)}}\le \dfrac{C_1(1+|t-s|^2)}{
    \bra{y-x+(t-s)\xi}}
    \end{align}
 and 
    \begin{align}
    \label{eta-g(s)} 
    \dfrac{1}{\bra{\eta -g(s;t,x,\xi)}}
    \le \dfrac{C_2(1+|t-s|)}{\bra{\eta -\xi}}.
    \end{align}
\end{lemma}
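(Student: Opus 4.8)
The plan is to integrate the system \eqref{ODE} twice and compare the true trajectory $(f(s),g(s))$ with the \emph{free} trajectory $\bigl(x-(t-s)\xi,\ \xi\bigr)$, exploiting the fact that the present hypothesis ``$|\alpha|\ge 1$'' forces the force field $\nabla_x V$ itself to be bounded. Indeed, taking $|\alpha|=1$ in \eqref{assumption_V} furnishes a constant $C>0$ with $|\nabla_x V(s,z)|\le C$ for all $(s,z)\in\re\times\re^n$. This uniform bound on the right-hand side of the second equation in \eqref{ODE} is precisely what is unavailable under the weaker assumption $|\alpha|\ge 2$ used in Lemma \ref{hensuuhenkan}, and it is the engine driving both estimates.

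First I would establish \eqref{eta-g(s)}. Integrating $\tfrac{d}{ds}g=-(\nabla_x V)(s,f(s))$ from $t$ to $s$ and using $g(t)=\xi$ gives
\begin{align*}
g(s;t,x,\xi)-\xi=-\int_t^s(\nabla_x V)\bigl(\tau,f(\tau;t,x,\xi)\bigr)\,d\tau,
\end{align*}
whence $|g(s;t,x,\xi)-\xi|\le C|t-s|$. Substituting this into $\tfrac{d}{ds}f=g$ and integrating once more (using $f(t)=x$) yields
\begin{align*}
f(s;t,x,\xi)-\bigl(x-(t-s)\xi\bigr)=-\int_t^s\!\int_t^\sigma(\nabla_x V)\bigl(\tau,f(\tau;t,x,\xi)\bigr)\,d\tau\,d\sigma,
\end{align*}
so that $\bigl|f(s;t,x,\xi)-\bigl(x-(t-s)\xi\bigr)\bigr|\le \tfrac{C}{2}|t-s|^2$.

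It then remains to convert these displacement bounds into the weighted inequalities \eqref{y-f(s)} and \eqref{eta-g(s)} by the Peetre inequality $\bra{a+b}\le\sqrt{2}\,\bra{a}\,\bra{b}$ together with the elementary estimate $\bra{u}\le 1+|u|$. For \eqref{eta-g(s)}, writing $\eta-\xi=(\eta-g(s;t,x,\xi))+(g(s;t,x,\xi)-\xi)$ gives
\begin{align*}
\bra{\eta-\xi}\le\sqrt{2}\,\bra{\eta-g(s;t,x,\xi)}\,\bra{g(s;t,x,\xi)-\xi}\le\sqrt{2}\,(1+C|t-s|)\,\bra{\eta-g(s;t,x,\xi)},
\end{align*}
which rearranges at once to \eqref{eta-g(s)}. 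The analogous computation with $y-x+(t-s)\xi=(y-f(s;t,x,\xi))+\bigl(f(s;t,x,\xi)-(x-(t-s)\xi)\bigr)$, now using $1+\tfrac{C}{2}|t-s|^2\le C'(1+|t-s|^2)$, yields \eqref{y-f(s)}.

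The argument is essentially free of serious obstacles once the boundedness of $\nabla_x V$ is noticed (this also guarantees, incidentally, that the solutions grow at most quadratically and hence exist globally). The only point demanding care is the double integration producing the quadratic factor $(1+|t-s|^2)$ in \eqref{y-f(s)}: one must keep the orders of the two displacements straight --- $|t-s|$ for the momentum $g$ and $|t-s|^2$ for the position $f$ --- since this difference is exactly what accounts for the distinct powers appearing on the right-hand sides of \eqref{y-f(s)} and \eqref{eta-g(s)}.
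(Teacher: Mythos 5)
Your proposal is correct and follows essentially the same route as the paper: integrate \eqref{ODE} once for $g$ and twice for $f$, use the boundedness of $\nabla_x V$ coming from \eqref{assumption_V} with $|\alpha|=1$ to bound the deviations from the free trajectory by $C|t-s|$ and $C|t-s|^2$ respectively, and then convert these into the weighted inequalities (the paper squares out $\bra{\cdot}$ by hand where you invoke Peetre's inequality, which is an immaterial difference). No gaps.
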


\begin{proof} First, we show \eqref{y-f(s)}.
Since $f(s;t,x,\xi)$ and $g(s;t,x,\xi)$ solve \eqref{ODE}, we have 
    \begin{align}
    \label{f(s)}
     f(s;t,x,\xi)
    &=f(t;t,x,\xi)+\int_t^s g(\tau ;t,x,\xi)d\tau \notag\\
    &=x + \int_t^s \left( g(t;t,x,\xi) - \int_t^{\tau}  
     (\nabla_xV)(\sigma,f(\sigma;t,x,\xi))d\sigma\right)d\tau \notag\\
    &=x+(s-t)\xi -\int_s^t\int_s^{\sigma}(\nabla_x V)(\sigma,
    f(\sigma;t,x,\xi))d\tau d\sigma \notag\\
    &=x+(s-t)\xi -\int_s^t (\sigma -s)(\nabla_x V)(\sigma,
    f(\sigma;t,x,\xi))d\sigma .
    \end{align}
Since $V$ satisfies \eqref{assumption_V} for all multi-indices $\alpha$
 with $|\alpha|\ge 1$, we have
    \begin{align}
    \label{V_bibun}
    |(\partial_{x_j}V)(\sigma ,
    f(\sigma;t,x,\xi))|\le 2C
    \end{align}  
 for $j=1,2,\ldots ,n$. 
By \eqref{f(s)} and \eqref{V_bibun}, we have 
    \begin{align*}
     |y-x+(t-s)\xi |
    &\le |y-f(s;t,x,\xi)|+|f(s;t,x,\xi)-x+ (t-s)\xi| \notag\\
    &\le |y-f(s;t,x,\xi)|+\left| \int_s^t (\sigma -s)(\nabla_x V)(\sigma,
     f(\sigma;t,x,\xi))d\sigma \right|\notag\\
    &\le |y-f(s;t,x,\xi)|+2\sqrt{n}C\left|\int_s^t |\sigma -s|
      d\sigma\right| \notag\\
    &= |y-f(s;t,x,\xi)|+\sqrt{n}C|t-s|^2 .
    \end{align*}
So, we have
    \begin{align*}
     \bra{y-x+(t-s)\xi}
     &\le \{ 1+2(|y-f(s;t,x,\xi)|^2+nC^2|t-s|^4)\}^{1/2}\\
     &\le \sqrt{2}(1+\sqrt{n} C|t-s|^2) \bra{y-f(s;t,x,\xi)}.
    \end{align*}
Putting $C_1=\sqrt{2}\,{\rm max}\{ 1, \sqrt{n} C\}$, 
 we obtain \eqref{y-f(s)}.\\

Next, we show \eqref{eta-g(s)}. 
Since $f(s;t,x,\xi)$ and $g(s;t,x,\xi)$ solve \eqref{ODE}, we have 
    \begin{align*}
     g (s;t,x,\xi)
    &=g (t;t,x,\xi)-\int_t^s \nabla_x V(\sigma,
     f(\sigma;t,x,\xi))d\sigma  \notag\\
    &=\xi + \int_s^t \nabla_x V(\sigma, f(\sigma;t,x,\xi))d\sigma
    \end{align*} 
 and thus, 
    \begin{align*}
    |\eta -\xi|
    &\le |\eta -g(s;t,x,\xi)|+|g(s;t,x,\xi)-\xi|\\
    &\le |\eta -g(s;t,x,\xi)|+\left| \int_s^t (\nabla_xV)(\sigma,
    f(\sigma;t,x,\xi )) d\sigma \right|\\
    &\le |\eta -g (s;t,x,\xi)|+2\sqrt{n}C|t-s| .
    \end{align*}
Hence, we have
    \begin{align*}
    \bra{\eta-\xi}
    &\le \left\{ 1+2(|\eta -g(s;t,x,\xi)|^2+4nC^2|t-s|^2)
    \right\}^{\frac{1}{2}}\\
    &\le \sqrt{2}(1+2\sqrt{n}C|t-s|)\bra{\eta -g(s;t,x,\xi)}.
    \end{align*}
Putting $C_2=\sqrt{2}\, {\rm max}\{1,2\sqrt{n}C\}$,
 we obtain \eqref{eta-g(s)}.
\end{proof}

\section{Proof of Theorem \ref{p-p estimate}}
We only consider the case $t\in [0,T]$. We can treat the case 
 $t\in[-T,0]$ in the same way.
First, by using wave packet transform, we transform \eqref{SE} into a
 first order partial differential equation and a lower order term.
By integration by parts, we have
    \begin{align}
    \label{t_part}
    &W_{\varphi (t,\cdot)}(i\partial_t u)(t,x,\xi)=i\partial_t
    W_{\varphi (t,\cdot)}u(t,x,\xi)+
    W_{i\partial_t\varphi (t,\cdot)}u(t,x,\xi)
    \end{align}
and 
    \begin{align}
    \label{delta_part}
    &W_{\varphi(t,\cdot)}\Big(\frac{1}{2}\Delta u\Big)(t,x,\xi)\notag\\
    &=W_{\frac{1}{2}\Delta \varphi(t,\cdot)}u(t,x,\xi)+
    i\xi\cdot\nabla_xW_{\varphi (t,\cdot)}u(t,x,\xi)-\frac{|\xi|^2}{2}
    W_{\varphi (t,\cdot)}u(t,x,\xi),
    \end{align}
 where $\varphi (t,x)=e^{\frac{1}{2}it\Delta}\varphi_0 (x)$.
Applying Taylor's theorem to $V (t,\cdot)$, we have, by integration by
 parts,
    \begin{align} 
    \label{V_part}
    &W_{\varphi(t,\cdot)}(Vu)(t,x,\xi)\notag\\
    &=\int_{\re^n} \overline{\varphi (t,y-x)}\Big( V(t,x)+\nabla_xV(t,x)\cdot
     (y-x)\notag\\
    &\quad \quad \quad +\sum_{j,k=1}^n (y_j-x_j)(y_k-x_k)V_{jk}(t,x,y)\Big)
     u(t,y)e^{-iy\cdot\xi}dy\notag\\
    &=\{V(t,x)+i\nabla_x V(t,x)\cdot \nabla_\xi-\nabla_x V(t,x)\cdot x\}
     W_{\varphi(t,\cdot)}u(t,x,\xi)+Ru(t,x,\xi),
    \end{align}
 where
    \begin{multline}
    \label{Ru}
    Ru(t,x,\xi)\\  =  \sum_{j,k=1}^n \int_{\re^n} \overline{\varphi
    (t,y-x)}V_{jk}(t,x,y)(y_j-x_j)(y_k-x_k) u(t,y)e^{-iy\cdot\xi}dy
    \end{multline}
 and 
    \begin{align}
    \label{V_jk}
     V_{jk}(t,x,y)=\int_0^1 \partial_{x_j}\partial_{x_k}
    V(t,x+\theta(y-x))(1-\theta)d\theta .
    \end{align} 
Since 
 $i\partial_t\varphi (t,x) +\frac{1}{2}\Delta \varphi(t,x) =0$,
 we have
    \begin{align}
    \label{phi_window}
    W_{i\partial_t\varphi (t,\cdot)}u(t,x,\xi)+
    W_{\frac{1}{2}\Delta \varphi(t,\cdot)}u(t,x,\xi)=0.
    \end{align}
Combining \eqref{t_part}, \eqref{delta_part}, \eqref{V_part} and 
 \eqref{phi_window}, the initial value problem \eqref{SE} is transformed to 
    \begin{equation*}
    \begin{cases}
    \Big( i\partial_t +i\xi\cdot\nabla_x -i\nabla_x V(t,x)\cdot \nabla_\xi
- \frac{1}{2}|\xi|^2
    -V(t,x)  \vspace{1mm}\\
    \hspace{20mm} +\nabla_x V(t,x)\cdot x\Big) 
    W_{\varphi (t,\cdot)}u(t,x,\xi)
    -Ru(t,x,\xi)=0,\vspace{1mm}\\ 
    W_{\varphi (0,\cdot)}u(0,x,\xi)=W_{\varphi_0}u_0(x,\xi).
    \end{cases}
    \end{equation*}
By the method of characteristics, we obtain
    \begin{multline}
    \label{kainohyouji}
    W_{\varphi (t,\cdot)}u(t,x,\xi)
    =e^{-i\int_0^t h(s;t,x,\xi)ds}
    \bigg( W_{\varphi_0} u_0(f(0;t,x,\xi),g(0;t,x,\xi))\\
    -i\int_0^t e^{i\int_0^\tau h(s;t,x,\xi)ds}Ru(\tau ,f(\tau;t,x,\xi), g
     (\tau;t,x,\xi)) d\tau\bigg),
    \end{multline}
 where $f(s;t,x,\xi)$ and $g(s;t,x,\xi)$ are solutions to \eqref{ODE}
 with $f(t)=x$ and $g(t)=\xi$, and
\begin{multline*}
h(s;t,x,\xi)\\
=\frac{1}{2}|g (s;t,x,\xi)|^2+
    V(s,f(s;t,x,\xi))-\nabla_x V(s,f(s;t,x,\xi))\cdot f(s;t,x,\xi).
\end{multline*}
By taking $L^p$-norm with respect to $x$ and $\xi$ on both sides of 
 \eqref{kainohyouji}, we have
    \begin{align}
    \label{solution}
    \|u(t,\cdot)\|_{M^{p,p}_{\varphi (t,\cdot)}}
    =\|W_{\varphi (t,\cdot)}u(t,x,\xi)\|_{L^p_{x,\xi}}
   \le \|I_1\|_{L^p_{x,\xi}}+\int_0^t \|I_2\|_{L^p_{x,\xi}}  d\tau ,
    \end{align}
 where 
    \begin{equation}
     \label{I}
     I_1=W_{\varphi_0}u_0(f(0;t,x,\xi),g(0;t,x,\xi)), 
     ~~ I_2=Ru(\tau, f(\tau;t,x,\xi),g(\tau;t,x,\xi)).
    \end{equation}

Now we consider the change of variables
 $X=f(0;t,x,\xi)$ and $\Xi =g(0;t,x,\xi)$. 
From Lemma \ref{hensuuhenkan} and the implicit function theorem, 
 we have
 $$\left| \dfrac{\partial (x,\xi)}{\partial (X,\Xi)}
  \right|=1.$$
So it follows that 
    \begin{align}
    \label{I1}
    \| I_1 \|_{L^p_{x,\xi}}
    =\left(\iint_{\re^{2n}} |W_{\varphi_0} u_0(X,\Xi)|^p 
    \left| \dfrac{\partial (x,\xi)}{\partial (X,\Xi)}\right|
    dXd\Xi\right)^{\frac{1}{p}}=\| u_0 
    \|_{M^{p,p}_{\varphi_0}}.
    \end{align}

On the other hand, from \eqref{Ru} and 
the inversion formula of wave packet transform for $u$,
 we have
    \begin{multline*}
    Ru(t,x,\xi)
    =
    \frac{1}{\|\varphi(t,\cdot)\|^2_{L^2}}
    \sum_{j,k=1}^n \iiint_{\re^{3n}} \varphi_{jk}(t,y-x) V_{jk}
    (t,x,y) \varphi(t,y-z) \\
    \times W_{\varphi(t,\cdot)}u(t,z,\eta) 
    e^{iy\cdot(\eta -\xi)} dz\dbar\eta dy,
    \end{multline*}
 where $\varphi_{jk}(t,y)=y_j y_k \overline{\varphi (t,y)}$. 
Take $N\in \na$ satisfying $2N>n$.
From
$$(1-\Delta_y)^N e^{iy\cdot (\eta -g(\tau;t,x,\xi))}=\bra{\eta
 -g(\tau;t,x,\xi)}^{2N}e^{iy\cdot (\eta -g (\tau;t,x,\xi))},$$
 we have
    \begin{align*}
   & \|I_2\|_{L^p_{x,\xi}}\\
   &=\| Ru(\tau,f(\tau;t,x,\xi),g(\tau;t,x,\xi))\|_{L^p_{x,\xi}}\\
    &\le \dfrac{1}{\|\varphi (\tau,\cdot)\|^2_{L^2}}
    \sum_{j,k=1}^n \bigg\| \iiint_{\re^{3n}} \Big| (1-\Delta_y)^N
    \Big\{\varphi_{jk}(\tau,y-f(\tau;t,x,\xi))  \\
    &~\times V_{jk}(\tau,f(\tau;t,x,\xi),y) \varphi (\tau,y-z) \Big\}\Big|
    \dfrac{|W_{\varphi (\tau,\cdot)} u(\tau,z,\eta)|}{
    \bra{\eta -g(\tau;t,x,\xi)}^{2N}} 
    dz\dbar\eta dy \bigg\|_{L^p_{x,\xi}} \\
    &\le \dfrac{1}{\|\varphi (\tau,\cdot)\|^2_{L^2}}
    \sum_{j,k=1}^n\sum_{|\beta_1|+|\beta_2|+|\beta_3|\le 2N}
     \bigg\| \iiint_{\re^{3n}}
    \Big| \partial^{\beta_1}_y\varphi_{jk}(\tau,y-f(\tau;t,x,\xi))       \\
    &~\times \partial^{\beta_2}_y V_{jk}(\tau,f(\tau;t,x,\xi),y)
    \partial^{\beta_3}_y\varphi (\tau,y-z) \Big|
    \dfrac{|W_{\varphi (\tau,\cdot)} u(\tau,z,\eta)|}{
    \bra{\eta -g(\tau;t,x,\xi)}^{2N}} 
    dz\dbar\eta dy \bigg\|_{L^p_{x,\xi}}.
   \end{align*}
Since $|\partial^{\beta_2}_y V_{jk}(\tau, f(\tau;t,x,\xi),y)|\le C_{\beta_2}$
 for $C_{\beta_2}>0$, we have, by the change of variables  
 $X=f(\tau;t,x,\xi)$ and $\Xi =g(\tau;t,x,\xi)$, Young's inequality
 and Lemma \ref{hensuuhenkan},
    \begin{align}
    \label{I2}
    &\|I_2\|_{L^p_{x,\xi}}\notag\\
    &\le \dfrac{1}{\|\varphi (\tau,\cdot)\|^2_{L^2}}
    \sum_{j,k=1}^n\sum_{|{\beta_1}|+|{\beta_2}|+|{\beta_3}|\le 2N} 
    C_{\beta_2}
    \bigg\{ \iint_{\re^{2n}}  \bigg( \iint_{\re^{2n}}
     \dfrac{|\partial^{\beta_1}_y \varphi_{jk}(\tau,y-X)|}{\bra{\eta
    -\Xi}^{2N}}\notag\\
    &\quad\times \int_{\re^n} |\partial^{\beta_3}_y\varphi (\tau,y-z)
    W_{\varphi (\tau,\cdot)} 
    u(\tau,z,\eta)| dz
    \dbar\eta dy \bigg)^p \left| \dfrac{\partial (x,\xi)}{\partial
    (X,\Xi)}\right| dXd\Xi
    \bigg\}^{\frac{1}{p}}\notag\\
    &\le \dfrac{1}{\|\varphi (\tau,\cdot)\|^2_{L^2}}
    \sum_{j,k=1}^n\sum_{|{\beta_1}|+|{\beta_2}|+|{\beta_3}|\le
     2N}\!\!\!\! C_{\beta_2}
    \left\| \dfrac{\partial^{\beta_1}_y
    \varphi_{jk}(\tau,y)}{\bra{\eta}^{2N}}
    \right\|_{L^1_{y,\eta}}\!\!\!\!
    \| \partial^{\beta_3}_y \varphi (\tau,\cdot)\|_{L^1}\notag\\
    &\quad \times\|W_{\varphi(\tau,\cdot )} u(\tau,z,\eta)
    \|_{L^p_{z,\eta}}\notag\\
    &\le C_{T}    \|u(\tau,\cdot )\|_{M^{p,p}_{\varphi (\tau,\cdot)}}
    \end{align}
 for $t\in [0,T]$ and $\tau\in [0,t]$.
From \eqref{solution}, \eqref{I1} and \eqref{I2}, we have 
    \begin{align*}
    \|u(t,\cdot)\|_{M^{p,p}_{\varphi (t,\cdot)}}
    \le \|u_0\|_{M^{p,p}_{\varphi_0}}+C_{T}
     \int_0^t \| 
    u(\tau,\cdot)\|_{M^{p,p}_{\varphi (\tau,\cdot)}}d\tau
    \end{align*} 
 for $t\in [0,T]$.
Then Gronwall's inequality yields 
$\| u(t,\cdot)\|_{M^{p,p}_{\varphi (t,\cdot)}}
\le C_T \| u_0\|_{M^{p,p}_{\varphi_0}}$ for $t\in [0,T]$.
\hfill $\square$

\section{Proof of Theorem \ref{p-q estimate}}
We only consider the case $t\in [0,T]$, since we can treat the case
 $t\in [-T,0]$ in the same way. 
First, we consider the case $(p,q)=(\infty, 1)$, next 
 $(p,q)=(1,\infty)$ and finally general $(p,q)$.

In the proof of Theorem \ref{p-p estimate}, we have already obtained
 $$|W_{\varphi (t,\cdot)}u(t,x,\xi)|\le |I_1|+\int_0^t|I_2|\,d\tau,$$
 where $I_1$ and $I_2$ are defined by \eqref{I}.
Take $N\in \na$ satisfying $2N>n$.
From 
    $$(1-\Delta_y)^N e^{iy\cdot (\eta-g(0;t,x,\xi))}=\bra{\eta
    -g(0;t,x,\xi)}^{2N}e^{iy\cdot (\eta-g(0;t,x,\xi))},$$
we have, by the inversion formula of wave packet transform for $u_0$ 
 and \eqref{eta-g(s)} in Lemma \ref{bra-estimate},
    \begin{align}
    \label{I1_est}
     |I_1|
    &=\dfrac{1}{\|\varphi_0\|_{L^2}^2}
      \bigg| \iiint_{\re^3} \overline{\varphi_0(y-f(0;t,x,\xi))} \varphi_0
     (y-z)\notag\\
     &\hspace{40mm}
      \times W_{\varphi_0}u_0(z,\eta) e^{iy\cdot(\eta -g(0;t,x,\xi))}
      dz\dbar\eta dy\bigg| \notag\\
    &\le \dfrac{1}{\|\varphi_0\|_{L^2}^2} 
     \iiint_{\re^{3n}} |(1-\Delta_y)^N \{\overline{\varphi_0(y-f(0;t,x,\xi))}
     \varphi_0 (y-z)\}|\notag\\
    &\hspace{40mm}\times \dfrac{|W_{\varphi_0}u_0(z,\eta)|}{
     \bra{\eta
     -g(0;t,x,\xi)}^{2N}} dz\dbar\eta dy \notag\\
    &\le C (1+|t|)^{2N}\!\! \sum_{|{\beta_1}|+|\beta_2|\le 2N}
     \iiint_{\re^{3n}} \big|\overline{\partial^{\beta_1}_y 
    \varphi_0(y-f(0;t,x,\xi))}\big|\notag \\
         &\hspace{40mm} \times |\partial^{\beta_2}_y\varphi_0 (y-z)|
|\dfrac{|W_{\varphi_0}u_0(z,\eta)|}{
     \bra{\eta -\xi}^{2N}} dz\dbar\eta dy.
    \end{align}
By Fubini's theorem, we have
    \begin{align*}
    &\|\|I_1\|_{L^\infty_{x}}\|_{L^1_\xi}\\
    &\le C (1+|t|)^{2N}
     \sum_{|{\beta_1}|+|{\beta_2}|\le 2N}\|\partial^{\beta_2}_y
    \varphi_0\|_{L^1}\\
    &\qquad\times \bigg\|\bigg\| \iint_{\re^{2n}} |\partial^{\beta_1}_y
     \overline{\varphi_0 (y-f(0;t,x,\xi)} |\dfrac{\|
     W_{\varphi_0}u_0(z,\eta)\|_{L^\infty_z}}{ 
    \bra{\eta -\xi}^{2N}} \dbar\eta dy 
    \bigg\|_{L^\infty_x}\bigg\|_{L^1_\xi}\notag\\
   &\le C (1+|t|)^{2N} 
    \sum_{|{\beta_1}|+|{\beta_2}|\le 2N}\|\partial^{\beta_2}_y
    \varphi_0\|_{L^1}
    \|\partial^{\beta_1}_y\varphi_0\|_{L^1}
    \|\|W_{\varphi_0}u_0(z,\eta)\|_{L^\infty_z}\|_{L^1_\eta}\\
   &\le C_T\|u_0\|_{M^{\infty,1}_{\varphi_0}}
    \end{align*}
for $t\in [0,T]$.
Similarly, we have
    \begin{align*}
    |I_2|
   & \le \dfrac{1}{\|\varphi (\tau,\cdot)\|^2_{L^2}} 
    \sum_{j,k=1}^n   \iiint_{\re^{3n}}  \Big| (1-\Delta_y)^N  
    \{ \varphi_{jk}(\tau,y-f(\tau;t,x,\xi)) \notag\\
   &\quad\quad \times V_{jk}(\tau,f(\tau;t,x,\xi),y)
    \varphi (\tau,y-z)\} \Big|
    \dfrac{|W_{\varphi (\tau,\cdot )}u(\tau,z,\eta)|}{
     \bra{\eta -g(\tau ;t,x,\xi)}^{2N}}  dz \dbar\eta  dy\notag\\
   & \le \dfrac{C(1+|t-\tau |)^{2N}}{\|\varphi (\tau,\cdot)\|^2_{L^2}}\\
   &\qquad \times \sum_{j,k=1}^n  \sum_{|{\beta_1}|+|\beta_2|+|{\beta_3}|\le 2N} 
    \iiint_{\re^{3n}}
    |\partial^{\beta_1}_y\varphi_{jk}(\tau,y-f(\tau;t,x,\xi))| \notag\\
   &\quad\quad \times |\partial^{\beta_2}_yV_{jk}(\tau,f(\tau;t,x,\xi),y) 
    \partial^{\beta_3}_y\varphi (\tau,y-z)| 
    \dfrac{|W_{\varphi (\tau,\cdot )}u(\tau,z,\eta)|}{
     \bra{\eta -\xi}^{2N}}  dz \dbar\eta  dy,
    \end{align*}
 where $\varphi_{jk}(t,y)=y_j y_k \overline{\varphi(t,y)}$ and 
 $V_{jk}$ is defined by \eqref{V_jk}.
Since 
 $$|\partial^{\beta_2}_y V_{jk}(\tau,f(\tau;t,x,\xi),y)| \le C_{\beta_2}$$
 for $C_{\beta_2} >0$,
 we have
    \begin{align*}
    \|\|I_2\|_{L^{\infty}_{x}}\|_{L^1_\xi}
    & \le \dfrac{C(1+|t-\tau|)^{2N}}{\|\varphi (\tau,\cdot)\|^2_{L^2}}\\
    & ~~\times\sum_{j,k=1}^n \sum_{|{\beta_1}|+|\beta_2|+|{\beta_3}|\le 2N} 
    \!\!\!\! C_{\beta_2}
    \bigg\|\bigg\| \iiint_{\re^{3n}}  
    |\partial^{\beta_1}_y\varphi_{jk}(\tau,y-f(\tau;t,x,\xi))|\\
   & ~~\times  |\partial^{\beta_3}_z \varphi (\tau,y-z) | 
    \dfrac{|W_{\varphi (\tau,\cdot )}u(\tau,z,\eta)|}{
     \bra{\eta -\xi}^{2N}}  dz \dbar\eta  dy
    \bigg\|_{L^\infty_x}\bigg\|_{L^1_\xi}\\
    &\le \dfrac{C'(1+T)^{2N}}{\|\varphi (\tau,\cdot)\|^2_{L^2}}
    \sum_{j,k=1}^n \sum_{|{\beta_1}|+|\beta_2|+|{\beta_3}|\le 2N} 
    C_{\beta_2}  \|\partial^{\beta_3}_z \varphi (\tau,z)\|_{L^1_z}\\
    & ~~
    \times \|\partial^{\beta_1}_y \varphi_{j,k} (\tau,y)\|_{L^1_y}
    \|\bra{\cdot}^{-2N}\|_{L^1}
     \|\|  W_{\varphi (\tau,\cdot)}u(\tau,z,\eta)
    \|_{L^\infty_z}\|_{L^1_\eta}\notag\\
   &\le C'_T \|u(\tau,
 \cdot)\|_{M^{\infty,1}_{\varphi (\tau, \cdot)}}
\end{align*}
for $t\in [0,T]$ and $\tau\in [0,t]$.
Hence, we have 
    \begin{align}
    \label{inf-1_I1_I2} 
     \| u(t,\cdot)\|_{M^{\infty,1}_{\varphi(t,\cdot)}}
    & \le  \|\| I_1\|_{L^\infty_x}\|_{L^1_\xi}+
     \int_0^t \|\| I_2\|_{L^\infty_x}\|_{L^1_\xi}d\tau\notag\\
    &\le C_T\|u_0\|_{M^{\infty,1}_{\varphi_0}}+
     C'_T\int_0^t \| u(\tau
     ,\cdot)\|_{M^{\infty,1}_{\varphi(\tau ,\cdot)}}d\tau
    \end{align}
 for $t\in [0,T]$.
Applying Gronwall's inequality to \eqref{inf-1_I1_I2}, we obtain 
    \begin{align}
    \label{inf-1}
    \| u(t,\cdot) \|_{M^{\infty,1}_{\varphi (t,\cdot)}}
    \le C''_{T} \| u_0 \|_{M^{\infty,1}_{\varphi_0}}
    \end{align}
for $t\in [0,T]$.

Next, we consider $(p,q)=(1,\infty)$.
Take $N\in\na$ satisfying $2N>n$. For all multi-indices ${\beta_1}$, 
we have
    \begin{align}
    \label{estimate_psi_f}
    &\quad \|\partial^{{\beta_1}}_{y} \varphi 
     (\tau,y-f(\tau;t,x,\xi))\|_{L^1_x}\notag\\
    &\le  C (1+|t-\tau|^{2})^{2N} \int_{\re^n} 
    \dfrac{\bra{y-f(\tau;t,x,\xi)}^{2N}}{\bra{y-x+(t-\tau)\xi}^{2N}}
    |\partial^{{\beta_1}}_{y} \varphi (\tau,y-f(\tau;t,x,\xi))|\, dx\notag\\
    &\le C(1+T^{2})^{2N} \left(\underset{\tau\in [0,T],y\in\re^{n}}{\rm sup}
    \bra{y}^{2N}|\partial^{\beta_1}_y \varphi (\tau,y)|
     \right)\int_{\re^n}\dfrac{1}{\bra{y-x+(t-\tau)\xi}^{2N}}dx\notag\\
    &\le C_T
    \end{align}
 for $t\in [0,T]$ and $\tau\in [0,t]$. 
Here, we have used \eqref{y-f(s)} in Lemma \ref{bra-estimate}.
Thus, we have, by \eqref{I1_est}, \eqref{estimate_psi_f}
 and Fubini's theorem,
    \begin{align*}
    &\|\|I_1\|_{L^1_x}\|_{L^\infty_\xi}\\
    &\le C (1+|t|)^{2N}\sum_{|{\beta_1}|+|\beta_2|\le 2N}\bigg\|
     \iiint_{\re^{3n}}  
    \|\partial^{\beta_1}_y \varphi_0 (y-f(0;t,x,\xi))\|_{L^1_{x}}\\
    &\hspace{45mm}\times
     |\partial^{\beta_2}_y \varphi_0 (y-z) | 
    \dfrac{|W_{\varphi_0}u_0(z,\eta)|}{\bra{\eta
    -\xi}^{2N}}dz\dbar\eta dy
    \bigg\|_{L^\infty_\xi}\\
    &\le C'_T \|u_0\|_{M^{1,\infty}_{\varphi_0}}
\end{align*}
 for $t\in [0,T]$.
In the similar way as above, it follows that 
    \begin{align*}
    &\|\|I_2\|_{L^{1}_{x}}\|_{L^\infty_\xi}\\
    & \le \dfrac{C(1+|t-\tau|)^{2N}}{\|\varphi (\tau,\cdot)\|^2_{L^2}}\\
    &~\times\sum_{j,k=1}^n \sum_{|{\beta_1}|+|\beta_2|+|{\beta_3}|\le 2N} 
    \bigg\|\bigg\| \iiint_{\re^{3n}}  
    |\partial^{\beta_1}_y\varphi_{j,k}(\tau,y-f(\tau;t,x,\xi))|\\
   & ~\times | \partial_y^{\beta_2} V_{j,k}(\tau,f(\tau;t,x,\xi),y)\,
    \partial^{\beta_3}_y \varphi (\tau,y-z) | 
    \dfrac{|W_{\varphi (\tau,\cdot )}u(\tau,z,\eta)|}{
     \bra{\eta -\xi}^{2N}}  dz \dbar\eta  dy
    \bigg\|_{L^1_x}\bigg\|_{L^\infty_\xi}\\
   &\le C''_T \|u(\tau,
 \cdot)\|_{M^{1,\infty}_{\varphi (\tau, \cdot)}}
\end{align*}
for $\tau\in [0,t]$ and $t\in [0,T]$.
Thus, we have
    \begin{align}
    \label{1-inf_I1_I2} 
     \| u(t,\cdot)\|_{M^{1,\infty}_{\varphi(t,\cdot)}}
    & \le  \|\| I_1\|_{L^1_x}\|_{L^\infty_\xi}+
     \int_0^t \|\| I_2\|_{L^1_x}\|_{L^\infty_\xi}d\tau\notag\\
    &\le C'_T\|u_0\|_{M^{1,\infty}_{\varphi_0}}+
     C''_T\int_0^t \| u(\sigma
     ,\cdot)\|_{M^{1,\infty}_{\varphi(\tau,\cdot)}}d\tau
    \end{align}
 for $t\in [0,T]$.
Applying Gronwall's inequality to \eqref{1-inf_I1_I2}, we obtain 
    \begin{align}
    \label{1-inf}
    \| u(t,\cdot) \|_{M^{1,\infty}_{\varphi (t,\cdot)}}
    \le C'''_{T} \| u_0 \|_{M^{1,\infty}_{\varphi_0}}
    \end{align}
for $t\in [0,T]$.

Finally, we consider the general case.
From Theorem \ref{p-p estimate}, we have
    \begin{align}
    \label{p-p}
    \| u(t,\cdot) \|_{M^{p,p}_{\varphi (t,\cdot)}}
    \le C_{T} \| u_0 \|_{M^{p,p}_{\varphi_0}}.
    \end{align}
Combing \eqref{inf-1}, \eqref{1-inf} and \eqref{p-p},
we have, by the complex interpolation theorem for modulation space,
 $$\|u(t,\cdot)\|_{M^{p,q}_{\varphi (t,\cdot)}}\le
 \widetilde{C}_T
\|u_0\|_{M^{p,q}_{\varphi_0}}$$
for $t\in [0,T]$.
Therefore we obtain the desired result.
~\hfill $\square$

\appendix
\section{}
First, We remark that if $V(t,x)\in C^\infty (\re\times\re^n)$ satisfies
 \eqref{assumption_V} for all multi-indices
 with $|\alpha|\ge 2$ then the ordinary differential equation \eqref{ODE}
 with the initial condition $f(t)=x$ and $g(t)=\xi$ has a unique solution
 on $\re$. In fact, the existence of the solution on
 $[t-\sigma,t+\sigma]$ for some $\sigma>0$ is proved by Picard's
 iteration scheme. Here is the outline. Let $f^{(0)}(s)\equiv x$ and
 $g^{(0)}(s)\equiv \xi$ and set 
    \begin{align*}
    f^{(k+1)}(s)\!=\!x+\!\!\int_{t}^s g^{(k)}(\tau)d\tau
    ~~\text{and}~~  g^{(k+1)}(s)
    \!=\!\xi-\!\!\int_{t}^s (\nabla_x V)(\tau, f^{(k)}(\tau))d\tau.
    \end{align*}
Then $\{f^{(k)}\}$ and $\{g^{(k)}\}$ converge uniformly to some
 functions $f(s)$ and $g(s)$ on $[t-\sigma ,t+\sigma]$ and the functions
 $f(s)$ and $g(s)$ satisfy the initial value problem and belong to 
 $C^\infty([t-\sigma ,t+\sigma])$. 
Moreover, by using following Lemma A.1, we can show that 
 above fact holds not only on $[t-\sigma ,t+\sigma]$ but also on
 $\re$, easily.
Lemma A.1 is also used in the proof of Lemmas A.2 and A.3.

\begin{lemmaA1}
Let $V\in C^\infty (\re\times \re^{n})$ satisfy
 \eqref{assumption_V} for all multi-indices with $|\alpha|\ge 2$.
Then, for all multi-indices $\beta$ with $|\beta|\ge 1$,
 $\partial^\beta_x V(t,x)$ is Lipschitz continuous with respect to
 $x$, more precisely, there exists $C_\beta >0$ such that 
    \begin{align*}
    \big| (\partial^\beta_x V)  \big(t,y\big)
    -(\partial^\beta_x V) \big( t,z\big)  \big| 
    \le  C_\beta n\, ||y-z||_{\infty}
    \end{align*}
 for all $t\in\re$, $y,z\in \re^n$.
\end{lemmaA1}

\begin{proof} 
Let $t\in\re$, $y=(y_1,\ldots, y_n)\in \re^n$ and 
 $z=(z_1,\ldots, z_n)\in \re^n$.
Since $|y_k-z_k|\le || y-z||_{\infty}$ for $k=1,\ldots ,n$,
 it is enough to show that 
   \begin{align*}
   \big| (\partial^\beta_x V)\big(t,y\big)-(\partial^\beta_x
    V)\big( t,z\big)\big|
   \le C_\beta \sum^{n}_{k=1}\big| y_k-z_k \big| .
   \end{align*}
Set $F(\theta)=(\partial^\beta _xV)(t,z+\theta (y-z))$.
We note that $F(\theta)\in C^{\infty}([0,1])$,
 $F(0)=(\partial^\beta_xV)(t,z)$ and $F(1)=(\partial^\beta_xV)(t,y)$.
By the fundamental theorem of calculus, we have
    \begin{align}
    \label{vbibunnosa}
    (\partial^\beta_xV)(t,y)-(\partial^\beta_xV)(t,z)
    & = \int_0^1\frac{d}{d\theta}F(\theta) d\theta \notag \\
    & = \int_0^1 \sum_{k=1}^n (y_k-z_k)
    (\partial_{x_k}\partial^\beta_x V) (t,z+\theta (y-z))d\theta .
    \end{align}
Since $V$ satisfies \eqref{assumption_V} for $|\alpha |\ge 2$, we obtain
    \begin{align}
    \label{vnoestimate-2}
    \big| (\partial_{x_k}\partial^\beta_x V) \big( t,z+
    \theta (y-z)\big)\big|  \le C_\beta 
    \end{align}
 for $k=1,2,\ldots ,n$. 
Combining \eqref{vbibunnosa} and
 \eqref{vnoestimate-2}, we obtain the desired result.
 \end{proof}

Next, we establish one more lemma relating to the Lemma A.3.

\begin{lemmaA2}
Let $h\in\re\backslash\{ 0\}$, $T>0$ and  $V\in C^\infty (\re\times \re^{n})$
 satisfy \eqref{assumption_V} for all multi-indices $\alpha$ with
 $|\alpha|\ge 2$. Suppose that $f(s;t,x,\xi)$ and $g(s;t,x,\xi)$ are
 solutions to \eqref{ODE} with $f(t)=x$ and $g(t)=\xi$. For 
 $k=1,\ldots,n$, we set  
    \begin{multline*}
    \phi_{h,k} (s;t,x,\xi)\\ =\big(f(s;t,x+he_k,\xi)-f(s;t,x,\xi),\, 
    g(s;t,x+he_k,\xi)-g(s;t,x,\xi)\big)
    \end{multline*}
 and 
    \begin{multline*}
    \psi_{h,k} (s;t,x,\xi)\\ =\big(f(s;t,x,\xi+he_k)-f(s;t,x,\xi),\, 
    g(s;t,x,\xi+he_k)-g(s;t,x,\xi)\big),
    \end{multline*}
 where  $e_k=(0,\ldots ,0,\overset{\underset{\smallsmile}{k}}{1},0,
 \ldots,0)\in \re^{n}$.
Then 
    \begin{align*}
    \lim_{h\to 0}\underset{|s-t|\le T}{\rm sup} 
    \|\phi_{h,k} (s;t,x,\xi) \|_\infty \!=0
    \quad and\quad 
    \lim_{h\to 0}\underset{|s-t|\le T}{\rm sup}
    \|\psi_{h,k} (s;t,x,\xi) \|_\infty \!=0.
    \end{align*}
\end{lemmaA2}

\begin{proof}
Here, we only show that 
 $\lim_{h\to 0}{\rm sup}_{|s-t|\le T}\|\phi_{h,1} (s;t,x,\xi)\|_\infty =0$.
We can treat the other cases in the same way.
Put 
    \begin{align*}
    F(s;t,x,\xi)&=f(s;t,x+he_1,\xi)-f(s;t,x,\xi)
    \end{align*}
 and 
    \begin{align*}
    G(s;t,x,\xi)&=g(s;t,x+he_1,\xi)-g(s;t,x,\xi).
    \end{align*}
Since $F(t;t,x,\xi)=f(t;t,x+he_1,\xi)-f(t;t,x,\xi)=he_1$ and
 $$\frac{d}{ds}F(s;t,x,\xi)=g
 (s;t,x+he_1,\xi)-g(s;t,x,\xi)=G(s;t,x,\xi),$$
 we have 
 $$F(s;t,x,\xi)=he_1+\int_t^s G (\tau;t,x,\xi)\,d\tau .$$
Thus, we have
    \begin{align}
    \label{x-tilde}
    \| F(s;t,x,\xi) \|_\infty
    \le |h|+ \left|\int_t^s \| G(\tau ;t,x,\xi)\|_\infty d\tau
    \right| .
    \end{align}

On the other hand, since $G(t;t,x,\xi)=0$ and
    \begin{align*}
    \dfrac{d}{ds}G(s;t,x,\xi) =-\nabla_x 
    V\big( s,f(s;t,x+he_1,\xi)\big) +\nabla_xV\big( s,f(s;t,x,\xi)\big),
    \end{align*}
 we have 
    $$G (s;t,x,\xi)
    \!=\! - \int_t^s \Big\{ \nabla_x V\big(\tau,f(\tau ;t,x+he_1,\xi)\big)-
    \nabla_xV\big(\tau ,f(\tau ;t,x,\xi)\big)\Big\} d\tau. $$ 
By Lemma A.1, there exists $C>0$ such that 
    \begin{align}
    \label{xi-tilde}
    &\| G(s;t,x,\xi) \|_\infty\notag\\
    &\le \left|\int_t^s \| \nabla_x V( \tau,f(\tau
    ;t,x+he_1,\xi))- \nabla_xV(\tau ,f(\tau ;t,x,\xi))
    \|_\infty d\tau \right| \notag\\
    &\le Cn \left|\int_t^s \|F(\tau ;t,x,\xi)\|_\infty d\tau \right|.
    \end{align}
From \eqref{x-tilde} and \eqref{xi-tilde}, we have
    \begin{align}
    \label{phi_hk}
    \| \phi_{h,1} (s;t,x,\xi)\|_\infty  \le  |h|+C' \left|\int_t^s \|
    \phi_{h,1} (\tau ,t,x,\xi)\|_\infty d\tau \right|,
    \end{align}
 where $C'={\rm max}\{ 1,Cn\}$.
Since $|s-t|\le T$, applying Gronwall's inequality to \eqref{phi_hk}
 gives
    \begin{align}
    \label{phi-gronwall}
    \| \phi_{h,1} (s;t,x,\xi)\|_\infty \le |h|\, e^{C'|s-t|}\le |h|\,e^{C'T}.
    \end{align}
Hence, we obtain the desired result.
\end{proof}

Next, we show the differentiability of the solution to \eqref{ODE} in
initial datum.

\begin{lemmaA3}
Let $T>0$, $V\in C^{\infty} (\re\times \re^{n})$ satisfy
 \eqref{assumption_V} for all multi-indices $\alpha$ with $|\alpha|\ge
 2$. Suppose that $f(s;t,x,\xi)$ and $g(s;t,x,\xi)$ are solutions to \eqref{ODE}
 satisfying $f(t)=x$ and $g(t)=\xi$. Then $f(s;t,x,\xi)$ and
 $g(s;t,x,\xi)$ are $C^\infty$-function with respect to $x$ and $\xi$ in
 $|s-t|\le T$. 
\end{lemmaA3}

\begin{proof}
Let $A(s,y)$ be the $2n\times 2n$ matrix defined by  
    \begin{align}
    \label{henbun_eq_matrix}
    A(s,y) = \left(
    \begin{array}{cc}
    \mbox{\huge $O_n$} &    \mbox{$\begin{array}{ccc}
     -\dfrac{\partial^2}{\partial x_1 \partial x_1} V(s,y)& \cdots & 
     -\dfrac{\partial^2}{\partial x_1 \partial x_n} V(s,y)\\
     \vdots & \ddots & \vdots \\
         -\dfrac{\partial^2}{\partial x_n \partial x_1} V(s,y)& \cdots & 
     -\dfrac{\partial^2}{\partial x_n \partial x_n} V(s,y)\\
    \end{array}$}\vspace{2mm}\\
    \mbox{\huge $E_n$}
    & \mbox{\huge $O_n$}
    \end{array}
    \right) ,
    \end{align}
 where $O_n$ is the $n\times n$ zero matrix and $E_n$ is the $n\times n$
 identity matrix.
Let $h\in\re\backslash\{ 0\}$ and put 
    \begin{multline*}
    \phi_{h,j} (s;t,x,\xi)\\ = \big( f(s;t,x+he_j,\xi)-f(s;t,x,\xi),\, 
    g(s;t,x+he_j,\xi)-g(s;t,x,\xi)\big)
    \end{multline*}
 and 
    \begin{multline*}
    \psi_{h,j} (s;t,x,\xi)\\ =\big(f(s;t,x,\xi+he_j)-f(s;t,x,\xi),\, 
    g(s;t,x,\xi+he_j)-g(s;t,x,\xi)\big),
    \end{multline*}
 where $e_j=(0,\cdots ,0,\overset{\underset{\smallsmile}{j}}{1},0,
 \cdots,0)\in \re^{n}$ and $j=1,2,\ldots ,n$.
Suppose that
 $$w^{(k)}(s;t,x,\xi)=(w_{1,k}(s;t,x,\xi),\ldots ,w_{2n,k}(s;t,x,\xi))$$
 is the solution of 
    \begin{align}
    \label{w_ODE}
    \begin{cases}
    \dfrac{dw(s)}{ds}=w(s)A\big( s,f(s;t,x,\xi)\big), \\
    w(t)=(0,\cdots ,0,
    \overset{\underset{\smallsmile}{k}}{1},0,\cdots,0),
    \end{cases}
    \end{align}
 where $k=1,2,\ldots ,2n$.

First, we show that 
    \begin{align}
    \label{uniform-1}
    \lim_{h\to 0}\underset{|s-t|\le T}{\rm sup}\left\|
    \dfrac{\phi_{h,j}(s;t,x,\xi)}{h}-w^{(j)}(s;t,x,\xi)\right\|_{\infty}=0.
    \end{align}
From \eqref{ODE} and \eqref{vbibunnosa}, it is easy to see that 
    \begin{align}
    \label{phi-bibun}
    & \dfrac{d}{ds}\left( \dfrac{\phi_{h,j}
     (s;t,x,\xi)}{h}\right)\notag \\
    &=\dfrac{1}{h} \int_0^1 \phi_{h,j}(s,t,x,\xi) \notag\\
    &\qquad  \times A\big( s,f(s;t,x+he_j,\xi)
    +\theta (f(s;t,x,\xi)-f(s;t,x+he_j,\xi))
    \big) d\theta \notag\\ 
    &=\dfrac{1}{h}\phi_{h,j} (s,t,x,\xi) A\big( s,f(s;t,x,\xi)\big) 
      +\gamma_{h,j} (s,t,x,\xi),
    \end{align}
where 
    \begin{multline}
    \label{gamma}
     \gamma_{h,j} (s;t,x,\xi)=\dfrac{1}{h}
     \int_0^1 \phi_{h,j} (s,t,x,\xi) \bigg\{
     A\Big( s,f(s;t,x+he_j,\xi)\\+\theta
     \big( f(s;t,x,\xi)-f(s;t,x+he_j,\xi)\big)\Big)
    -A\big( s,f(s;t,x,\xi)\big)\bigg\} d\theta.
        \end{multline}
By the definition of $A(s,y)$ and Lemma A.1, there exists 
 $C>0$ such that 
    \begin{align}
    \label{Anosa}
    & \quad\bigg\|
     A\Big( s,f(s;t,x+he_j,\xi)+\theta
     \big( f(s;t,x,\xi)-f(s;t,x+he_j,\xi)\big)\Big)\notag\\
    &\qquad \qquad\qquad\qquad\qquad\qquad\qquad\qquad\qquad\qquad
     -A\big( s,f(s;t,x,\xi)\big)\bigg\|_\infty\notag\\
    &\le Cn (1-\theta ) \| f(s;t,x+he_j,\xi) - f(s,t,x,\xi)\|_\infty\notag\\
    &\le Cn \|\phi_{h,j}(s;t,x,\xi)\|_\infty
    \end{align}
 for  $\theta \in [0,1]$.
Thus \eqref{phi-gronwall}, \eqref{gamma} and \eqref{Anosa} yield 
    \begin{align}
    \label{gamma-norm}
    \|\gamma_{h,j} (s;t,x,\xi)\|_\infty
    &\le \dfrac{2Cn^2}{|h|} \|\phi_{h,j} (s;t,x,\xi) \|_\infty^2\notag\\
    &\le 2Cn^2 \|\phi_{h,j} (s;t,x,\xi)\|_\infty e^{C'|s-t|}
    \end{align}
 for $C'>0$.
As  $V$ satisfies the estimate \eqref{assumption_V} for all
 multi-indices $\alpha$ with $|\alpha| \ge 2$,
 there exists $M>0$ such that
    \begin{equation}
    \label{A_norm}
    \| A(s,f(s;t,x,\xi))\|_\infty \le M.
    \end{equation}
Since $\phi_{h,j} (t;t,x,\xi)/h-w^{(j)}(t;t,x,\xi)=0$, we have,
 by \eqref{w_ODE} and \eqref{phi-bibun}, 
    \begin{align*}
    &\quad\dfrac{\phi_{h,j} (s;t,x,\xi)}{h}-w^{(j)}(s;t,x,\xi)\\
    &=\int_t^s \left(\dfrac{1}{h}\dfrac{d}{d\tau}\phi_{h,j} (\tau;t,x,\xi)-
     \dfrac{d}{d\tau}w^{(j)}(\tau;t,x,\xi)\right)d\tau \\
    &=\int_t^s \gamma_{h,j}(\tau;t,x,\xi)\,d\tau \\
    &\qquad\qquad +
    \int_t^s 
    \left( \dfrac{\phi_{h,j} (\tau ;t,x,\xi)}{h}-
    w^{(j)} (\tau ,t,x,\xi)\right)A\big( \tau ,f(\tau ;t,x,\xi)\big)
     \,d\tau .
    \end{align*}
So we have, by \eqref{gamma-norm} and \eqref{A_norm},
    \begin{align}
    \label{phi-w}
    &\bigg\| \dfrac{\phi_{h,j} (s;t,x,\xi)}{h}
    -w^{(j)} (s;t,x,\xi) \bigg\|_\infty  \notag\\
    &\le  2Cn^2 e^{C'T}\int_{t-T}^{t+T} \|\phi_{h,j} (\tau;t,x,\xi)
     \|_\infty d\tau\notag\\
    &\qquad\qquad\qquad +2nM \left| \int_t^s
    \bigg\| \dfrac{\phi_{h,j} (\tau ;t,x,\xi)}{h}
    -w^{(j)} (\tau ,t,x,\xi)\bigg\|_\infty \,d\tau \right| 
    \end{align}
 for $|s-t|\le T$.
Applying Gronwall's inequality to \eqref{phi-w}, we obtain
    \begin{multline*}
    \bigg\| \dfrac{\phi_{h,j} (s ,t,x,\xi)}{h}-w^{(j)} (s;t,x,\xi)
     \bigg\|_\infty\\
    \le 2Cn^2 e^{(2nM+C')T}
      \int_{t-T}^{t+T}  \|\phi_{h,j} (\tau ,t,x,\xi)\|_\infty d\tau
    \end{multline*}
 for $|s-t|\le T$.
By Lemma A.2, we obtain \eqref{uniform-1}.
Thus, we have
    \begin{align*}
    \frac{\partial f_l (s;t,x,\xi)}{\partial x_k}=w_{l,k} (s;t,x,\xi)
    \in C(\re) 
    \end{align*}
and
    \begin{align*}
    \frac{\partial g_l (s;t,x,\xi)}{\partial x_k}=w_{n+l,k} (s;t,x,\xi)
    \in C(\re)
    \end{align*}
 for $k,l=1,\ldots ,n$.

On the other hand, in the similar calculation as above, we have
    \begin{align*}
    \lim_{h\to 0}\underset{|s-t|\le T}{\rm sup}\left\|
    \dfrac{\psi_{h,j}(s;t,x,\xi)}{h}-w^{(n+j)}(s;t,x,\xi)\right\|_{\infty}=0.
    \end{align*}
Thus,
    \begin{align*}
    \frac{\partial f_l (s;t,x,\xi)}{\partial \xi_k}=w_{l,n+k} (s;t,x,\xi)
    \in C(\re) 
    \end{align*}
 and 
    \begin{align*}
    \frac{\partial g_l (s;t,x,\xi)}{\partial \xi_k}=w_{n+l,n+k} (s;t,x,\xi)
    \in C(\re)
    \end{align*}
 for $k,l=1,\ldots ,n$. Hence, $f(s;t,x,\xi)$ and $g(s;t,x,\xi)$ are
 $C^1$-function with respect to $x$ and $\xi$.

Using above fact, we can easily show, by induction, that if $V(t,x)$ is
 $C^{r+1}$-function in $x$ then $f(s;t,x,\xi)$ and $g(s;t,x,\xi)$ are
 $C^r$-function with respect to $x$ and $\xi$.
Therefore we obtain the desired result.
\end{proof}

\noindent{\bf Proof of Lemma \ref{hensuuhenkan}.} 
Put
$w^{(k)}(s;t,x,\xi)=\left( w_{1,k}, w_{2,k}, \ldots ,w_{2n,k}\right)$
 for $1\le k\le 2n$. By Lemma A.3, $w^{(k)}(s)$ are the solutions of
    \begin{align*}
    \begin{cases}
    \dfrac{dw(s)}{ds}=w(s)A\big( s,f(s;t,x,\xi)\big), \\
    w(t)=(0,\cdots ,0,
    \overset{\underset{\smallsmile}{k}}{1},0,\cdots,0),
    \end{cases}
    \end{align*}
 where $A(s,f(s;t,x,\xi))=(a_{ij})$ is defined by
 \eqref{henbun_eq_matrix}. Then 
    \begin{multline}
    \label{lem31_1}
    \dfrac{d({\rm det}M(s;t,x,\xi))}{ds}\\[1mm]
    =\!\left|
    \begin{array}{ccc}
    \dfrac{d w_{1,1}}{ds} & \cdots&\dfrac{d w_{1,2n}}{ds} \\
    w_{2,1} &\cdots &  w_{2,2n} \\
    \vdots && \vdots\\
    w_{2n,1} &\cdots &  w_{2n,2n} \\
    \end{array}
    \right| +\cdots + \left|
    \begin{array}{ccc}
    w_{1,1} & \cdots& w_{1,2n} \\
    w_{2,1} & \cdots& w_{2,2n} \\
    \vdots && \vdots\\
    \dfrac{d w_{2n,1}}{ds}&\cdots&\dfrac{d w_{2n,2n}}{ds} \\
    \end{array}
 \right| .
\end{multline}
Since 
 $\frac{d w^{(k)}(s)}{d s}=w^{(k)}(s)A(s,f(s;t,x,\xi))$,
 we have
 $\frac{d w_{l,k}(s)}{d s}=\sum_{j=1}^{2n} a_{jl}w_{j,k}$
 and then 
    \begin{align}
    \label{lem31_2}
    \left|\begin{array}{ccc}
    \dfrac{dw_{1,1}}{ds} & \cdots & 
    \dfrac{dw_{1,2n}}{ds} \\
    w_{2,1} & \cdots &  w_{2,2n} \\
    \vdots && \vdots\\
    w_{2n,1} & \cdots &  w_{2n,2n} \\
    \end{array} \right|
    =\sum_{j=1}^{2n}a_{j1} \left|
    \begin{array}{ccc}
    w_{j,1} & \cdots &  w_{j,2n} \\
    w_{2,1} & \cdots &  w_{2,2n} \\
    \vdots && \vdots\\
    w_{2n,1} & \cdots &  w_{2n,2n} \\
    \end{array}
    \right|  =a_{11}{\rm det}M(s).
    \end{align}
From \eqref{lem31_1} and \eqref{lem31_2}, we have
    $$\dfrac{d({\rm det}M(s;t,x,\xi))}{ds}
    =(a_{11}+\cdots +a_{2n2n}){\rm det}M(s;t,x,\xi).$$
Since $\text{tr}A(s,f(s;t,x,\xi))=0$, we have 
$\frac{d ({\rm det}M(s;t,x,\xi))}{ds}=0.$
Therefore 
$$
{\rm det}M(s;t,x,\xi)={\rm det}M(t;t,x,\xi)={\rm det}E_{2n}=1.\quad
\square
$$


~\\

\end{document}